\newtheorem{theorem}{Theorem}[section]
\newtheorem{lemma}[theorem]{Lemma}
\newtheorem{prop}[theorem]{Proposition}
\theoremstyle{definition}
\theoremstyle{remark}
\newtheorem{remark}[theorem]{Remark}
\newcommand{\mysection}[1]{\section{#1}
\setcounter{equation}{0}}
\newcommand{\eqdef}{\overset{\mbox{\tiny{def}}}{=}}
\newcommand{\bR}{\mathbb R}
\newcommand{\Div}{\operatorname{div}}
\renewcommand{\epsilon}{\varepsilon}
\newcommand{\alK}{\alpha_k}
\begin{document}
\title[Partial regularity of 6D steady solutions to NSE] {On partial regularity of
steady-state solutions to the 6D Navier-Stokes equations}

\author[H. Dong]{Hongjie Dong}
\address[H. Dong]{Division of Applied Mathematics, Brown University,
182 George Street, Providence, RI 02912, USA}
\email{Hongjie\_Dong@brown.edu}
\thanks{H. Dong was partially supported by the NSF grant DMS-0800129.}

\author[R. M. Strain]{Robert M. Strain}
\address[R. M. Strain]{Department of Mathematics, University of Pennsylvania
David Rittenhouse Lab., 209 South 33rd Street, Philadelphia, PA 19104, USA}
\email{strain at math.upenn.edu}
\thanks{R. M. Strain was partially supported by the NSF grant DMS-0901463.}


\subjclass[2010]{35Q30, 76D03, 76D05}

\keywords{Navier-Stokes equations, partial regularity, Hausdorff's
dimension}

\begin{abstract}
Consider steady-state weak solutions to the incompressible
Navier-Stokes equations in six spatial dimensions. We prove that the 2D
Hausdorff measure of the set of singular points is equal to zero.  This problem was mentioned in 1988 by Struwe
\cite{Str88}, during his study of the five dimensional case.
\end{abstract}

\setcounter{tocdepth}{1}
\maketitle

\thispagestyle{empty}

\mysection{Introduction}

In this paper we consider the incompressible steady-state Navier-Stokes
equations in {\em six} spatial dimensions with unit viscosity
\begin{equation}
                            \label{NSeq1}
u\nabla u-\Delta u+\nabla p=f,\quad \text{div}\, u=0,
\end{equation}
in a domain $\Omega\subset \bR^6$. We are interested in the partial
regularity of weak solutions $(u,p)$ to \eqref{NSeq1}.

Although the problem of the global regularity of solutions to
the time-dependent Navier-Stokes equations in three and higher space dimensions
is still widely open,
many authors have studied the partial regularity of weak solutions.
In his pioneering work \cite{Sch1, Sch2, Sch4}, Scheffer established various partial regularity results for weak solutions to the 3D Navier-Stokes equations satisfying the so-called local energy inequality.
In 3D, the notion of {\em suitable weak solutions} was first introduced in a celebrated paper \cite{CKN} by Caffarelli,
Kohn and Nirenberg. They called a pair consisting of velocity $u$
and pressure $p$ a suitable weak solution if $u$ has finite energy
norm, $p$ belongs to the Lebesgue space $L_{5/4}$, $u$ and $p$ are
weak solutions to the Navier-Stokes equations, and they satisfy a local
energy inequality. After proving an $\epsilon$-regularity criteria
for local boundedness of solutions, they established partial regularity of solutions and estimated the Hausdorff dimension of the singular set. They proved that, for any suitable weak solution $(u, p)$, there is an open subset
where the velocity field $u$ is regular and they showed
that the 1D Hausdorff measure of the complement of this subset is
equal to zero. In \cite{flin}, F. Lin gave
a more direct and simplified proof of Caffarelli, Kohn and Nirenberg's
result. A detailed treatment was later given by Ladyzhenskaya and
Seregin in \cite{OL2}. Recently, some extended results
have been obtained in a number of papers; see Seregin \cite{Seregin}, Gustafson, Kang and Tsai \cite{tsai}, and Vasseur \cite{Va07}, Kukavica \cite{Ku08}, and the references therein.

Much fewer results are available in the literature for the 4D and higher dimensional time-dependent Navier-Stokes equations, in which case the problem is more super-critical. In \cite{Sch3}, Scheffer showed that there exists a
weak solution $u$ in  $\bR^4\times \bR^+$, which may not necessarily verify the local
energy estimate, such that $u$ is
continuous outside a locally closed set of $\bR^4\times \bR^+$ whose
3D Hausdorff measure is finite. In a recent paper \cite{DD07}, the first author and D. Du proved that, for any local-in-time smooth solution to the 4D Navier-Stokes equations, the 2D Hausdorff measure of the set of singular points at the first potential blow-up
time is equal to zero. We remark that, in terms of the method, the dimension four in \cite{DD07} is critical due to the following reason. To the best of our knowledge all the existing methods on partial regularity for the Navier-Stokes equations share the following
prerequisite condition: in the energy inequality the nonlinear term
should be controlled by the energy norm under the Sobolev imbedding
theorem. Actually, four is the highest dimension in which we have
such condition: $L_3^tL_3^x\hookrightarrow L_\infty^tL_2^x\cap L_2^tH^1$. In five or higher dimensions this condition fails.

This paper concerns the partial regularity of weak solutions $u$ to the steady-state Navier-Stokes equations \eqref{NSeq1}. In the literature, the most relevant paper on the subject is \cite{Str88} by Struwe, in which he proved the following $\epsilon$-regularity result, which implies that weak solutions are regular outside a singular set of zero 1D Hausdorff measure.

\begin{theorem}[Struwe \cite{Str88}]
                    \label{thm0}
Let $\Omega$ be an open domain in $\bR^5$ and $f\in L_q(\Omega)$ for some $q>5/2$. There exists an absolute constant $\epsilon_0>0$ such that the following holds true. If $u\in H^1(\Omega;\bR^5)$ is a weak solution to \eqref{NSeq1} which satisfies a generalized energy inequality, and if for some $x_0\in \Omega$ there is $R_0>0$ such that
$$
r^{-1}\int_{|x-x_0|<r}|\nabla u|^2\,dx\le \epsilon_0, \quad \forall r\in (0,R_0),
$$
then $u$ is H\"older continuous in a neighborhood of $x_0$.
\end{theorem}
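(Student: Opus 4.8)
The plan is to run an $\epsilon$-regularity iteration on scale-invariant quantities, in the spirit of Caffarelli--Kohn--Nirenberg and F.~Lin, adapted to the supercritical five-dimensional setting. First I would record the local (generalized) energy inequality furnished by the hypothesis: testing \eqref{NSeq1} against $u\phi$ with a nonnegative cutoff $\phi$ and using $\Div u=0$ gives
\begin{equation*}
\int |\nabla u|^2\phi\,dx \le \int \frac{|u|^2}{2}\Delta\phi\,dx + \int\Big(\frac{|u|^2}{2}+p\Big)\,u\cdot\nabla\phi\,dx + \int f\cdot u\,\phi\,dx .
\end{equation*}
Dimensional analysis of \eqref{NSeq1} under $u_\lambda(x)=\lambda u(\lambda x)$, $p_\lambda=\lambda^2 p(\lambda x)$ singles out the scale-invariant quantities $A(r)=r^{-1}\int_{B_r}|\nabla u|^2\,dx$, $\ G(r)=r^{-1}\int_{B_r}|u|^4\,dx$, $\ D(r)=r^{-2}\int_{B_r}|p|^{3/2}\,dx$, the first of which is exactly the quantity assumed small at every scale. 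The goal is to show that $A(r)$ does not merely stay below $\epsilon_0$ but in fact decays like $r^{2\alpha}$ for some $\alpha>0$; by the Morrey--Campanato characterization this yields $u\in C^{0,\alpha}$ near $x_0$.

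The two workhorse estimates would be an excess-decay bound and a pressure bound. For the first, on each ball $B_r$ I would split $u=v+w$, where $v$ solves the homogeneous Stokes system with the boundary data of $u$ and $w$ carries the source $f-\Div(u\otimes u)$ with zero boundary data. Interior estimates for Stokes give the linear decay $A_v(\theta r)\lesssim\theta^{4}A(r)$ (the exponent being $n-1=4$), while the energy estimate for $w$ yields $r^{-1}\int_{B_r}|\nabla w|^2\lesssim G(r)+(\text{forcing})$, so that
\begin{equation*}
A(\theta r)\lesssim \theta^{4}A(r)+\theta^{-1}G(r)+\theta^{-1}r^{\,6-10/q}\,\|f\|_{L_q(B_r)}^2 .
\end{equation*}
For the pressure I would take the divergence of \eqref{NSeq1} to obtain $-\Delta p=\partial_i\partial_j(u_iu_j)-\Div f$, localize, and split $p$ into a Newtonian-potential part controlled by Calder\'on--Zygmund ($\|p_{nl}\|_{L_{3/2}}\lesssim\|u\|_{L_3}^2$) plus a harmonic part enjoying the decay $\theta^{3}$; this produces a companion inequality $D(\theta r)\lesssim\theta^{3}D(r)+\theta^{-2}(\text{velocity})+(\text{forcing})$.

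The crux --- and the point at which dimension five is genuinely supercritical --- is the control of $G(r)$, equivalently of the cubic and quartic velocity terms, by the energy. The plain Sobolev embedding $H^1(\bR^5)\hookrightarrow L_{10/3}$ is too weak, since $4>10/3$, so no single-scale bound suffices. Here the hypothesis that $A(\rho)\le\epsilon_0$ at \emph{every} scale is decisive: it says precisely that $\nabla u$ is small in the Morrey space with $\int_{B_\rho}|\nabla u|^2\lesssim\epsilon_0\rho$, and the Morrey--Sobolev (Adams) embedding then upgrades this to $\sup_\rho\rho^{-1}\int_{B_\rho}|u|^4\lesssim\epsilon_0^{2}$, i.e. $G(r)\lesssim\epsilon_0\,\sup_{\rho\le r}A(\rho)$. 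I expect this Morrey interpolation to be the main obstacle, both in proving the embedding with the correct homogeneity and in arranging the bookkeeping so that the smallness is inherited at the scales relevant to the iteration.

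With these ingredients in hand I would close the scheme: fix $\theta$ so that $C\theta^{4}\le\tfrac14\theta^{2\alpha}$, which is possible since $4>2\alpha$ for small $\alpha$, then choose $\epsilon_0$ so small that the $G$-contribution $C\theta^{-1}\epsilon_0$ is also $\le\tfrac14\theta^{2\alpha}$, and absorb the forcing using $q>5/2$, which guarantees the favorable power $r^{\,6-10/q}$ with exponent $6-10/q>2$. This yields a geometric decay $A(\theta^k r_0)\lesssim\theta^{2\alpha k}$, hence $A(r)\lesssim r^{2\alpha}$ at $x_0$; a routine continuity and covering argument propagates the smallness, and therefore the decay, to a full neighborhood of $x_0$, so that Campanato's embedding delivers the asserted H\"older continuity.
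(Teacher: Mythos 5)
The fatal gap sits exactly at the step you yourself identify as the crux. The hypothesis of the theorem controls $r^{-1}\int_{B(x_0,r)}|\nabla u|^2\,dx$ only for balls \emph{centered at the single point} $x_0$, whereas the Morrey--Adams embedding you invoke to get $G(r)\lesssim\epsilon_0\sup_\rho A(\rho)$ requires the Morrey seminorm taken over balls centered at (essentially) every point of a neighborhood: Hedberg's pointwise bound for $I_1(\nabla u)$ needs small balls centered at each point $x$ where $u$ is evaluated. For $x_1\neq x_0$ the inclusion $B(x_1,r)\subset B(x_0,r+|x_1-x_0|)$ gives $\int_{B(x_1,r)}|\nabla u|^2\le\epsilon_0(r+|x_1-x_0|)$, which is of size $\epsilon_0 r$ only when $r\gtrsim|x_1-x_0|$; below that scale the hypothesis says nothing. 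As a pure function-space implication your claim is in fact false: with $d=|x_1-x_0|$ small, take $u(y)=a|y-x_1|^{-11/8}\psi(y)$, $\psi$ a cutoff supported in $B(x_1,d/2)$, and $a^2=c\,\epsilon_0 d^{3/4}$; then $u\in H^1$ and $r^{-1}\int_{B(x_0,r)}|\nabla u|^2\le\epsilon_0$ for every $r$, yet $\int|u|^4=\infty$ near $x_1$. The same phenomenon defeats your closing ``routine continuity and covering argument'': the all-scale smallness $A(\cdot,x_1)\lesssim\epsilon_0$ transfers to a nearby center $x_1$ only at scales $r\gtrsim|x_1-x_0|$, so the property you iterate does not propagate to a neighborhood. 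Nor can the quartic term be rescued at a single scale: in five dimensions the Gagliardo--Nirenberg exponent needed to bound $\|u\|_{L_4}$ by $\|\nabla u\|_{L_2}$ and $\|u\|_{L_2}$ is $5/4>1$, so $G$ is simply not controllable by single-center, single-scale energy quantities; this is precisely the supercriticality, and your scheme concentrates it in a step that cannot be justified. (A related warning sign: your argument never actually uses the generalized energy inequality, which is an explicit hypothesis of Struwe's theorem.)

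The structure that works --- the one this paper uses in 6D and indicates for 5D in Remark \ref{finalrem}; note the paper does not reprove Theorem \ref{thm0} but quotes it from Struwe \cite{Str88}, whose proof uses Solonnikov's Stokes estimates \cite{So68} together with elliptic-regularity techniques --- reverses your logical order. One first proves a \emph{one-scale} $\epsilon$-regularity criterion (the analogue of Theorem \ref{thm4}): smallness of the scale-invariant cubic quantities $C+D+F$ at a single radius $\rho_0$ implies, via an iteration built on the generalized energy inequality (Lemmas \ref{lemma05.18.1}--\ref{lemma05.3.1}, Proposition \ref{lemma05.03.18.6}), decay at all smaller radii and, crucially, at all centers $x_1\in B(x_0,\rho_0/8)$; one-scale smallness \emph{does} transfer to nearby centers by inclusion, because only one fixed comparable scale is involved. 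The all-scales hypothesis is then used exactly once, to verify that one-scale hypothesis (Proposition \ref{lemma05.03.02}). The reason this closes while your scheme does not is that the energy-inequality iteration only ever needs \emph{cubic} quantities, $\int|u|^3$ and $\int|p|^{3/2}$-type terms, and in dimension five (or six) these are controlled at a single scale by Sobolev, $H^1\hookrightarrow L_3$, plus Poincar\'e; your Stokes-decomposition energy estimate for $w$ instead produces the quartic $\int|u|^4$, which is out of reach. If you wish to keep a linear-solver splitting in the spirit of Struwe's original argument, you must measure $\nabla w$ in a norm compatible with Sobolev, e.g.\ use $\|u\otimes u\|_{L_{5/3}}\le\|u\|_{L_{10/3}}^2$ and Stokes estimates in $L_{5/3}$, rather than $L_2$-testing against the quadratic nonlinearity; as written, the scheme does not close.
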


The proof of Theorem \ref{thm0} relies on some techniques for proving regularity for elliptic systems (cf. Morrey \cite{Mo66} and Giaquinta \cite{Gi83}) and estimates for the linear Stokes' system due to Solonnikov \cite{So68}. A similar approach was used before by Giaquinta and Modica in \cite{GM82} to study the steady-state Navier-Stokes equations in dimensions $d\le 4$. Because time corresponds to two space dimensions, in some sense the 5D stationary problem is similar to the 3D non-stationary problem. Therefore, dimension five is the smallest dimension for which the steady-state Navier-Stokes equations are super critical. We also note that Theorem \ref{thm0} was improved by K. Kang \cite{Ka04}, in which partial regularity up to the boundary was established for a smooth domain $\Omega\subset \bR^5$. The existence of regular solutions to the steady-state Navier-Stokes in high dimensions have also attracted substantial attention. We refer the reader to \cite{Ge79,Str95,FR94,FR96,FR95,FR98,FS09} and the references therein.

In \cite{Str88} Struwe raised the following interesting question: does the analogous partial regularity result hold in dimension six or higher.  It seems to us that some arguments in \cite{Str88} do not work in six or higher dimensions.
In this paper, we give a positive answer to Struwe's question in dimension six. To be more precise, we shall prove the following regularity result with a sufficiently small constant $\epsilon_0$: Let $\Omega$ be an open set in $\bR^6$, $f\in L_{6,\text{loc}}(\Omega)$, and let $u$ be a weak solution to \eqref{NSeq1} satisfying a local energy inequality \eqref{energy}. Then if for some $x_0\in \Omega$ there exists an $R_0>0$ such that
$$
r^{-2}\int_{|x-x_0|<r}|\nabla u|^2\,dx\le \epsilon_0,\quad \forall r\in (0,R_0),
$$
then $u$ is H\"older continuous in a neighborhood of $x_0$. In particular, it follows that the 2D Hausdorff measure of the set of singular points of the weak solution $u$ is equal to zero.

Related to Struwe's work, our proof also uses some techniques which appeared in the study of the regularity theory for elliptic systems; but our approach is quite different from that in \cite{Str88}. In particular, we do not use any estimate for the linear Stokes' systems. Roughly speaking, there are three steps in our proof.  In the first step, we essentially follow the argument in \cite{DD07}, which in turn used some ideas in \cite{OL2} and \cite{flin}. The novelties are in the second and third steps. In the second step, we choose suitable test functions in the energy inequality and then use an iteration method to establish a weak decay estimate of certain scaling invariant quantities. In the last step, we successively improve this decay estimate by a bootstrap argument, and finally we use the elliptic theory to get a good estimate of the $L_{3/2}$ norm of $\nabla u$, which yields the H\"older regularity thanks to the Morrey lemma.

It is, however, unclear to us whether similar results hold in seven or higher dimensions. In fact, we believe that six is the highest
dimension to which our approach can be applied; see Remark \ref{finalrem}. Therefore, in some sense, our results are critical in terms of the dimension.

To conclude this introduction, we explain some notation used in the sequel:
$\bR^d$ is the $d$-dimensional Euclidean space with a fixed
orthonormal basis. A typical point in ${\mathbb R}^d$ is denoted by
$x=(x_1,x_2,...,x_d)$. As usual the summation convention over
repeated indices is enforced. And $x\cdot y=x_jy_j = \sum_{j=1}^d x_j y_j$ is the inner
product for $x,y\,\in\bR^d$.
The standard Lebesgue spaces are denoted by $L_p$ $(p\ge 1)$.
Various constants
are denoted by $N$ in general and the expression $N=N(\cdots)$ means
that the given constant $N$ depends only on the contents of the
parentheses.

\mysection{Setting and main results}
                                        \label{Sec2}

For summable functions $p,u=(u_i)$ and $\tau=(\tau_{ij})$, we use
the following standard differential operators
$$
u_{,i}=D_i u,\quad \nabla
p=(p_{,i}),\quad \nabla u=(u_{i,j}),
$$
$$
\text{div}\,u=u_{i,i},\quad\Div\tau=(\tau_{ij,j}),\quad
\Delta u=\text{div}\nabla u.
$$
These are all understood in the sense of distributions.

We use the following notation for spheres and balls
$$
S(x_0,r)=\{x\in \bR^6\vert \,|x-x_0|=r\},\quad S(r)=S(0,r),\quad
S=S(1),
$$
$$
B(x_0,r)=\{x\in \bR^6\vert \,|x-x_0|<r\},\quad B(r)=B(0,r),\quad B=B(1).
$$
We also denote the mean value of a summable function as follows
$$
[u]_{x_0,r}=\frac{1}{|B(r)|}\int_{B(x_0,r)}u(x)\,dx.
$$
Here $|A|$ as usual denotes the Lebesgue measure of the set $A$.

Let $x_0$ be a given point in $\Omega$ and $r>0$ a real
number such that $B(x_0,r)\subset \Omega$. It is known that
in the sense of distributions one has
\begin{multline*}
\Delta p=-D_{ij}\big(u_iu_j\big)+\Div f
\\
=-D_{ij}\big((u_i-[u_i]_{x_0,r})(u_j-[u_j]_{x_0,r})\big)+\Div f, \quad
\text{in}\,\,B(x_0,r).
\end{multline*}
This will hold for a weak solution to \eqref{NSeq1}.
Now let $\eta(x)$ be a smooth function on $\bR^6$ supported in the unit
ball $B(1)$, $0\leq \eta\leq 1$ and $\eta\equiv 1$ on $\bar B(2/3)$.
We consider the decomposition
\begin{equation}\label{pressDECOMP}
p={\tilde
p}_{x_0,r}+h_{x_0,r},
\quad
\text{in} ~ B(x_0,r).
\end{equation}
Above $\tilde p_{x_0,r}$ solves the following Poisson equation
$$
\Delta \tilde p_{x_0,r}
=
-D_{ij}\big((u_i-[u_i]_{x_0,r})(u_j-[u_j]_{x_0,r})\eta_{x_0,r}\big)
+\Div(f\eta_{x_0,r}),
$$
where $\eta_{x_0,r}(x)\eqdef \eta((x-x_0)/r)$.
Then $h_{x_0,r}$
is harmonic in $B(x_0,2r/3)$.

We will omit the indices of $\tilde p$ and $h$ whenever there is no possibility of confusion. The following
notation will be used throughout the article:
\begin{align*}
A(r)&=A(r,x_0)=\frac{1}{r^4}\int_{B(x_0,r)}|u|^2\,dx,\\
E(r)&=E(r,x_0)=\frac{1}{r^2}\int_{B(x_0,r)}|\nabla u|^2\,dx,\\
C(r)&=C(r,x_0)=\frac{1}{r^3}\int_{B(x_0,r)}|u|^3\,dx,\\
D(r)&=D(r,x_0)=\frac{1}{r^3}\int_{B(x_0,r)}|p-[h]_{x_0,r}|^{3/2}\,dx,\\
F(r)&=F(r,x_0)=\int_{B(x_0,r)}|f|^{2}\,dx.
\end{align*}
Notice these objects are invariant under the natural scaling for \eqref{NSeq1}:
$$
u(x)\to \lambda u(\lambda x),\quad p(x)\to \lambda^2 p(\lambda x),\quad
f(x)\to \lambda^3 f(\lambda x).
$$
We will use these quantities to study the regularity of 6D steady-state suitable weak solutions to \eqref{NSeq1}.

We say that a pair of functions $(u,p)$ is a suitable weak solution to \eqref{NSeq1} in $\Omega$ if $u\in H^1_{\text{loc}}(\Omega)$ and $p\in L_{3/2,\text{loc}}(\Omega)$ satisfy \eqref{NSeq1} in the weak sense and additionally the generalized local energy inequality holds for any non-negative test function $\psi\in C_0^\infty(\Omega)$:
\begin{equation}
                    \label{energy}
2\int_{\Omega}|\nabla u|^2\psi\,dx
\leq \int_{\Omega}|u|^2\Delta \psi+(|u|^2+2p)u\cdot
\nabla\psi+f\cdot u ~ \psi\,dx.
\end{equation}
The existence of regular solutions to the Dirichlet problem of \eqref{NSeq1} in four dimensions have been obtained Gerhardt \cite{Ge79}, in five dimensions by Struwe \cite{Str95}, and in five and six dimensions by Frehse and Ruzicka  \cite{FR94,FR96}; see also \cite{heywood,FR95,FR98,FS09} for other related results. We observe that the regular solution constructed in \cite{FR96} satisfies \eqref{energy}. On the other hand,
the uniqueness does not hold in general unless some smallness assumption is imposed on the data.

By the Sobolev imbedding theorem,
for any functions $u\in H^1_{\text{loc}}$ and $r>0$, we have the following inequality
\begin{equation}
                        \label{eq11.30}
\int_{B(r)}|u|^3\,dx\leq N
\left(\int_{B(r)}|\nabla u|^2\, dx\right)^{3/2}
 +N r^{-3}\left(\int_{B(r)}|u|^2\,dx\right)^{3/2}.
\end{equation}
This implies that $C(r)$ is well defined for a suitable weak solution.

Next we state the main results of the article.

\begin{theorem}[$\epsilon$-regularity criterion in terms of $E$]
                    \label{thm3}
Let $\Omega$ be an open set in $\bR^6$, $f\in L_{6,\text{loc}}(\Omega)$, and suppose that the pair
$(u,p)$ is a suitable weak
solution to \eqref{NSeq1} in $\Omega$. Then there is a positive number
$\epsilon_0$ satisfying the following property. Assume that for a
point $x_0\in \Omega$  the following inequality holds:
\begin{equation}
                                    \label{eq4.1.12}
\limsup_{r\downarrow 0}E(r)\leq \epsilon_0.
\end{equation}
Then $u$ is H\"older continuous in a neighborhood of $x_0$.
\end{theorem}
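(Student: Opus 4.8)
\emph{Reduction and overall strategy.} The plan is to show that the scale-invariant quantities $A$, $C$, $D$, $E$ attached to $(u,p)$ all decay as $r\downarrow 0$ near $x_0$, and to convert this into a Morrey-type bound on $\nabla u$ from which H\"older continuity follows by Morrey's lemma. First I would use \eqref{eq4.1.12} to pass from a $\limsup$ to the uniform smallness $E(r)\le 2\epsilon_0$ on some ball $B(x_0,R_1)\subset\Omega$; this is the only place the hypothesis enters, and everything afterward is local. Step one, following \cite{DD07} (which in turn rests on \cite{OL2,flin}), is to derive the scaled local energy inequality: inserting the cutoff $\eta_{x_0,r}$ into \eqref{energy}, using $\operatorname{div} u=0$ to replace $p$ by $p-[h]_{x_0,r}$ in the pressure term, and applying H\"older's inequality, I expect
$$
E(r/2)\le N\big(A(r)+C(r)+D(r)^{2/3}C(r)^{1/3}\big)+N\,F(r)^{1/2}A(r)^{1/2}.
$$
The decisive feature of six dimensions is the \emph{critical} embedding $\dot H^1\hookrightarrow L_3$, so that \eqref{eq11.30} gives $C(r)\le N\big(E(r)^{3/2}+A(r)^{3/2}\big)$; since $E(r)^{3/2}\le(2\epsilon_0)^{1/2}E(r)$, the cubic term is a small multiple of $E$, which is exactly how the smallness of $E$ controls the supercritical nonlinearity.

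\emph{Step two: a weak decay estimate by iteration.} I would record one-step inequalities comparing each quantity at scale $\theta r$ with the quantities at scale $r$. Poincar\'e's inequality yields $A(\theta r)\le N\theta^{-4}E(r)+N\theta^2 A(r)$. For the pressure I would use the decomposition \eqref{pressDECOMP}: Calder\'on--Zygmund bounds $\tilde p$ in $L_{3/2}$ by $\||u-[u]|^2\|_{L_{3/2}}$ plus a forcing contribution, while the harmonic part $h$ oscillates little on small balls, its gradient being controlled by interior estimates. Carrying out the scaling I expect
$$
D(\theta r)\le N\theta^{9/2}D(r)+N\theta^{-3}E(r)^{3/2}+N\theta^{-3}\big(\text{forcing}\big),
$$
the gain $\theta^{9/2}$ coming from the regularity of harmonic functions. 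Feeding these into the energy inequality, distributing the mixed term $D^{2/3}C^{1/3}$ by Young's inequality into small multiples of $D$ and powers of $E,A$, and forming a single functional such as $\Phi=E+A+D$, I would first choose $\theta$ small to make the contractive coefficients ($\theta^2$, $\theta^{9/2}$) small, then choose $\epsilon_0$ small so that the $E^{1/2}$-weighted nonlinear contributions are absorbed, arriving at $\Phi(\theta r)\le\tfrac12\Phi(r)+N\,G(r)$ with $G$ a decaying forcing term built from $F$. Iterating over dyadic scales then gives a weak power decay $\Phi(r)\le N r^{\sigma_0}$ for some small $\sigma_0>0$.

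\emph{Step three: bootstrap and conclusion.} Reinserting the weak decay into the same one-step inequalities improves the exponent repeatedly, raising the decay rate of $E$, $A$, $C$, $D$ up to the ceiling imposed by $f\in L_{6,\mathrm{loc}}$. Once the decay of $C$ and $D$ places the nonlinearity $u\cdot\nabla u$ and $\nabla p$ in a suitable Morrey space, elliptic ($L_p$/Poisson) estimates applied to $-\Delta u=f-\nabla p-u\cdot\nabla u$ upgrade this to a Morrey bound $\int_{B(x,r)}|\nabla u|^{3/2}\,dx\le N r^{9/2+\delta}$ for some $\delta>0$, uniformly in centers near $x_0$. Morrey's Dirichlet growth lemma then yields $u\in C^{0,\alpha}$ in a neighborhood of $x_0$, as claimed.

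\emph{Main obstacle.} The genuine difficulty is Step two. Because dimension six is precisely critical for $\dot H^1\hookrightarrow L_3$, the nonlinear and pressure terms sit at the same order as the energy and leave no slack in the Sobolev inequality; the iteration can be closed only by exploiting the smallness of $E$ and the harmonic decay of the regular part of the pressure \emph{simultaneously}, and the powers of $\theta$ and $\epsilon_0$ must be balanced so that $A$, $D$, and $E$ contract together rather than feeding one another's growth. A subtler point is that the naive bound obtained by interpolating the $L_{3/2}$ Morrey norm of $\nabla u$ directly from the decay of $E$ is too weak to meet the $r^{9/2+\delta}$ threshold of Morrey's lemma, so the elliptic step is essential and must be fed decay of the full family $A,C,D,E$, not of $E$ alone.
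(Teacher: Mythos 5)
Your overall architecture---pass from the $\limsup$ hypothesis \eqref{eq4.1.12} to smallness, establish a weak power decay of the scale-invariant quantities by a one-step inequality plus iteration, then bootstrap and conclude with elliptic $L_{3/2}$ estimates and Morrey's lemma---coincides with the paper's, and several ingredients are exactly right (the pressure decomposition with harmonic-part gain $\theta^{9/2}$ as in \eqref{eq11.30d}, the critical embedding \eqref{eq4.48}, the $r^{9/2+\delta}$ Morrey threshold, and the observation that the elliptic step is indispensable). The genuine gap is in your Step two: the contraction $\Phi(\theta r)\le\tfrac12\Phi(r)+N G(r)$ cannot be derived from the inequalities you wrote down. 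Your energy inequality, obtained with a standard cutoff, contains the \emph{linear} term $N A(r)$ (coming from $\int|u|^2\Delta\psi$) with a constant $N$ that is not small, and your Poincar\'e inequality $A(\theta r)\le N\theta^{-4}E(r)+N\theta^2A(r)$ feeds $E$ back into $A$ with the huge constant $N\theta^{-4}$. Smallness of $\epsilon_0$ only absorbs terms that are \emph{superlinear} in the iterated quantities (e.g.\ $E^{3/2}\le\sqrt{2\epsilon_0}\,E$); it does nothing for these two linear terms. Concretely, for any weights $\lambda,\mu>0$ and $\Phi=E+\lambda A+\mu D$, closing the linear part of your system would require simultaneously $\lambda N\theta^{-4}\le\tfrac12$ (coefficient of $E(r)$) and $N+\lambda N\theta^2\le\tfrac{\lambda}{2}$ (coefficient of $A(r)$), i.e.\ $\lambda\le\theta^4/(2N)$ and $\lambda\ge 2N$ at once---impossible. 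So the iteration does not close; this is a missing idea, not bookkeeping.

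The missing idea is the paper's Lemma \ref{lemma05.18.1}: test the local energy inequality \eqref{energy} not with a plain cutoff but with $\psi=\psi_1\psi_2$, where $\psi_2(x)=(r^2+|x-x_1|^2)^{-2}$ is a smoothed fundamental solution of the $6$D Laplacian. Since $\Delta\psi_2<0$ everywhere, the dangerous quantity $-\int|u|^2\psi_1\Delta\psi_2\,dx$ sits on the \emph{left} of the inequality with a favorable sign and dominates $c\,r^{-2}A(\theta\rho)$, while the derivatives of $\psi_1$ act only on the annulus where $\psi_2\simeq\rho^{-4}$; one obtains \eqref{eq18.2.31},
$$
A(\theta\rho)+E(\theta\rho)\le N\theta^2A(\rho)+N\theta^{-3}\big([A(\rho)+E(\rho)]^{3/2}+D(\rho)\big)+N\theta^{-6}F(\rho),
$$
in which the only term linear in the quantities being iterated is $N\theta^2A(\rho)$: the pressure enters as $D=(D^{2/3})^{3/2}$, superlinear in $D^{2/3}$, which is also why the correct functional is $A+E+D^{2/3}$ (as in Lemma \ref{lemma05.3.1}) rather than your $E+A+D$. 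A second, lesser gap: your Steps one and two run only at the center $x_0$, but Morrey's lemma needs decay of $\int_{B(x_1,r)}|\nabla u|^{3/2}\,dx$ uniformly over $x_1$ near $x_0$, whereas \eqref{eq4.1.12} is posed at $x_0$ alone. The paper resolves this by factoring through Theorem \ref{thm4}: Proposition \ref{lemma05.03.02} converts the $\limsup$ hypothesis into smallness of $C+D+F$ at a \emph{single} scale $\rho_0$, which transfers to all nearby centers with only a constant loss (since $B(x_1,\rho_0/8)\subset B(x_0,\rho_0/4)$), after which the decay iteration of Proposition \ref{lemma05.03.18.6} is run at every such $x_1$. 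You assert uniformity in the center in Step three but never establish it.
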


\begin{theorem}[$\epsilon$-regularity criterion in terms of $C$, $D$ and $F$]
                    \label{thm4}
Let $\Omega$ be an open set in $\bR^6$, $f\in L_{6,\text{loc}}(\Omega)$, and suppose that the pair
$(u,p)$ is a suitable weak
solution to \eqref{NSeq1} in $\Omega$. There is a positive number
$\epsilon_0$ satisfying the following property. Assume that for a
point $x_0\in \Omega$ and for some $\rho_0$ such that $B(x_0,\rho_0)\subset \Omega$
the inequality
\begin{equation}\label{eq4.1.11}
C(\rho_0)+D(\rho_0)+F(\rho_0)\leq \epsilon_0
\end{equation}
holds. Then $u$ is H\"older continuous in a neighborhood of $x_0$.
\end{theorem}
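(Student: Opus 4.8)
\emph{The plan is to reduce Theorem \ref{thm4} to Theorem \ref{thm3}}: I will show that the single-scale smallness \eqref{eq4.1.11} propagates to all smaller scales and in particular forces $\limsup_{r\downarrow 0}E(r)$ to be as small as we like, after which Theorem \ref{thm3} applies directly. Write $\Phi(r)=C(r)+D(r)$. First I would record two preliminary bounds. From the local energy inequality \eqref{energy}, testing against a standard cut-off supported in $B(x_0,\rho)$ and equal to $1$ on $B(x_0,\rho/2)$ (using $\Div u=0$ to subtract the constant $[h]_{x_0,\rho}$ from the pressure term), Hölder's inequality yields an energy estimate of the schematic form $E(\rho/2)\le N\big(A(\rho)+C(\rho)+C(\rho)^{1/3}D(\rho)^{2/3}+F(\rho)^{1/2}C(\rho)^{1/3}\big)$. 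Since $A(\rho)\le NC(\rho)^{2/3}$ by Hölder, this says $E$ at scale $\rho/2$ is controlled by $\Phi(\rho)+F(\rho)$ and is small whenever the latter is. The second bound is the scale-invariant Sobolev--Poincar\'e inequality in six dimensions: because the Sobolev conjugate of $2$ is exactly $2^\ast=3$, one has $\|u-[u]_{x_0,\rho}\|_{L_3(B(x_0,\rho))}\le N\|\nabla u\|_{L_2(B(x_0,\rho))}$ with no power of $\rho$, equivalently $\rho^{-3}\int_{B(x_0,\rho)}|u-[u]_{x_0,\rho}|^3\,dx\le NE(\rho)^{3/2}$. This borderline embedding, sharpening \eqref{eq11.30}, is precisely what makes the six-dimensional case tractable.

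Next I would establish a one-step decay estimate for $\Phi$. For $C$, splitting $u=(u-[u]_{x_0,\rho})+[u]_{x_0,\rho}$ on $B(x_0,\theta\rho)$ and combining the Sobolev--Poincar\'e bound with $|[u]_{x_0,\rho}|\le N\rho^{-1}C(\rho)^{1/3}$ gives $C(\theta\rho)\le N\theta^3C(\rho)+N\theta^{-3}E(\rho)^{3/2}$. For $D$, I would use the decomposition $p=\tilde p_{x_0,\rho}+h_{x_0,\rho}$ of \eqref{pressDECOMP}. The Calder\'on--Zygmund estimate applied to the Poisson equation for $\tilde p$ bounds its $L_{3/2}$ norm by $\|u-[u]_{x_0,\rho}\|_{L_3}^2$ plus a forcing contribution, so that $(\theta\rho)^{-3}\int_{B(x_0,\theta\rho)}|\tilde p|^{3/2}\,dx\le N\theta^{-3}\big(E(\rho)^{3/2}+F(\rho)^{3/2}\big)$; the harmonic part $h$ obeys interior gradient estimates, so after expressing its oscillation over $B(x_0,\theta\rho)$ through $D(\rho)$ and the already-controlled $\tilde p$ part, it contributes a favorable power of $\theta$, say $N\theta^{3/2}\big(D(\rho)+E(\rho)^{3/2}+F(\rho)^{3/2}\big)$. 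Collecting the pieces produces $\Phi(\theta\rho)\le N\theta^{3/2}\Phi(\rho)+N\theta^{-3}E(\rho)^{3/2}+N\theta^{-3}G(\rho)$, where $G$ gathers the forcing; since $f\in L_{6,\mathrm{loc}}$ one has $F(r)\le Nr^4\|f\|_{L_6}^2$, so $G(r)\to 0$ as $r\downarrow 0$.

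The iteration then closes as follows. Feeding the energy estimate into the Sobolev--Poincar\'e bound gives $E(\rho)^{3/2}\le N\big(\Phi(2\rho)+F(2\rho)\big)^{3/2}$, and as long as $\Phi$ stays below the threshold the crucial gain $\Phi^{3/2}\le\epsilon_0^{1/2}\Phi$ appears. I would first fix $\theta\in(0,1/2)$ so small that $N\theta^{3/2}\le 1/4$, and then fix $\epsilon_0$ so small that $N\theta^{-3}\epsilon_0^{1/2}\le 1/4$. Under \eqref{eq4.1.11}, an induction on $k$ yields $\Phi(\theta^k\rho_0)\le 2^{-k}\Phi(\rho_0)+(\text{a geometrically summable forcing tail tending to }0)$, hence $\Phi(\theta^k\rho_0)\to 0$; the energy estimate then gives $E(\theta^k\rho_0)\to 0$, and since $E(r)$ on an annulus $[\theta^{k+1}\rho_0,\theta^k\rho_0]$ is controlled by $E$ at the neighboring scale up to the fixed factor $\theta^{-2}$, we conclude $\limsup_{r\downarrow 0}E(r)=0$. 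Theorem \ref{thm3} applied at $x_0$ then delivers the Hölder continuity, and the statement about the singular set follows by the usual covering argument.

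\emph{The main obstacle} I anticipate is the pressure step: one must carefully track how the scale-dependent splitting $p=\tilde p_{x_0,r}+h_{x_0,r}$ and the subtracted mean $[h]_{x_0,r}$ evolve as $r$ decreases, and verify that the Calder\'on--Zygmund and harmonic estimates can be assembled so that the only terms lacking an intrinsic positive power of $\theta$ instead carry a factor of $E^{3/2}$ (equivalently, an extra $\epsilon_0^{1/2}$). It is exactly this interplay---favorable $\theta$ powers for the linear, harmonic parts and smallness of $\epsilon_0$ for the quadratic, nonlinear parts---that must be balanced to make the one-step map genuinely contractive.
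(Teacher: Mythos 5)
Your reduction runs in the wrong direction and is circular relative to the logical structure of this paper: here Theorem \ref{thm3} is not an independent result but is itself \emph{deduced from} Theorem \ref{thm4} (via Proposition \ref{lemma05.03.02}, which converts the hypothesis $\limsup_{r\downarrow 0}E(r)\le\epsilon_1$ into the one-scale smallness \eqref{eq4.1.11}). So invoking Theorem \ref{thm3} to prove Theorem \ref{thm4} proves nothing unless you supply an independent proof of Theorem \ref{thm3} --- and that is exactly where all the difficulty lies. Your iteration scheme (propagation of smallness of $\Phi=C+D$ down the scales, with the gain $\Phi^{3/2}\le\epsilon_0^{1/2}\Phi$ and the forcing tail $F(r)\le Nr^4\|f\|_{L_6}^2$) is essentially the paper's Step 2 machinery (Lemmas \ref{lemma05.18.1}, \ref{lemma05.3.1} and Proposition \ref{lemma05.03.18.6}), and it is fine as far as it goes; but what it delivers is only qualitative decay $E(\theta^k\rho_0)\to 0$, or with more care a rate $E(\rho)\le N\rho^{\alpha}$ for some $\alpha\in(0,2)$. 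In dimension six such information does not by itself yield pointwise regularity, and appealing to Theorem \ref{thm3} as a black box cannot substitute for the missing mechanism.

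The quantitative obstruction, which the paper makes explicit at the start of Step 3, is this: the scale-iteration/bootstrap argument can never produce decay better than $E(\rho)\le N\rho^{2-\delta}$, i.e. $\int_{B(x_1,\rho)}|\nabla u|^2\,dx\le N\rho^{4-\delta}$, while Morrey's lemma in $\bR^6$ requires the exponent $4+\epsilon$. Your proposal contains nothing that crosses this threshold. The paper crosses it by a separate elliptic-theory step: after the bootstrap has pushed the decay exponent arbitrarily close to $2$ (uniformly over centers $x_1\in B(x_0,\rho_0/8)$, a uniformity your sketch would also need but only addresses at $x_0$), one splits $u=v+w$ on a well-chosen ball, with $v$ harmonic with boundary values $u$ and $w$ solving the Poisson equation with right-hand side $D_j(u_iu_j)+D_i(p-[h])-f_i$; the interior gradient bound for $v$ and the $L_{3/2}$ Calder\'on--Zygmund estimate for $w$, combined on a ball of the carefully tuned radius $r=\rho^{5/4-\delta/8}/4$, yield $\int_{B(x_1,r)}|\nabla u|^{3/2}\,dx\le Nr^{\beta}$ with $\beta>6-3/2$, and only then does Morrey's lemma apply. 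Without this step (or some replacement for it), your argument cannot conclude H\"older continuity, so the proposal as it stands has a genuine gap: it omits the hardest and most novel part of the proof.
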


\begin{theorem}[Partial regularity]
                    \label{thm1}
Let $\Omega$ be an open set in $\bR^6$, $f\in L_{6,\text{loc}}(\Omega)$, and suppose that the pair
$(u,p)$ is a suitable weak
solution to \eqref{NSeq1} in $\Omega$. Then the 2D Hausdorff
measure of the set of singular points in $\Omega$ is equal
to zero.
\end{theorem}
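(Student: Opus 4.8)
The plan is to obtain Theorem~\ref{thm1} as a consequence of the $\epsilon$-regularity criterion of Theorem~\ref{thm3} together with a measure-theoretic covering argument in the spirit of Caffarelli--Kohn--Nirenberg \cite{CKN}, Lin \cite{flin}, and Ladyzhenskaya--Seregin \cite{OL2}. Call $x_0\in\Omega$ a \emph{regular point} if $u$ is H\"older continuous in some neighborhood of $x_0$, and let $\Sigma$ denote the set of singular (i.e.\ non-regular) points. By the contrapositive of Theorem~\ref{thm3}, every $x_0\in\Sigma$ must satisfy
$$
\limsup_{r\downarrow 0}\,\frac{1}{r^2}\int_{B(x_0,r)}|\nabla u|^2\,dx>\epsilon_0 .
$$
Thus $\Sigma$ is contained in the set of points where the upper $2$-density of the measure $\mu(A)\eqdef\int_A|\nabla u|^2\,dx$ exceeds $\epsilon_0$. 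Since $u\in H^1_{\text{loc}}(\Omega)$, the set function $\mu$ is a nonnegative Borel measure that is finite on every compact subset $K\subset\Omega$, and it suffices to prove $\mathcal{H}^2(\Sigma\cap K)=0$ for each such $K$, because $\Omega$ can be exhausted by countably many compacts.

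First I would invoke the standard covering lemma: if $\mu$ is a finite nonnegative Borel measure and $E$ is a set such that $\limsup_{r\downarrow 0} r^{-2}\mu(B(x,r))>\epsilon_0$ for every $x\in E$, then $\mathcal{H}^2(E)\le N\epsilon_0^{-1}\mu(E)$ with $N=N(6)$; this is proved by a Vitali-type selection of pairwise disjoint balls at arbitrarily small scales. Applying it with $E=\Sigma\cap K$ gives
$$
\mathcal{H}^2(\Sigma\cap K)\le N\epsilon_0^{-1}\mu(\Sigma\cap K)\le N\epsilon_0^{-1}\mu(K)<\infty ,
$$
so that $\Sigma\cap K$ has finite $2$-dimensional Hausdorff measure.

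Next I would upgrade finiteness to vanishing by an absolute-continuity argument. Since $2<6$, a subset of $\bR^6$ of finite $\mathcal{H}^2$ measure has vanishing $6$-dimensional Hausdorff measure, hence $|\Sigma\cap K|=0$. Because $d\mu=|\nabla u|^2\,dx$ with $|\nabla u|^2\in L_{1,\text{loc}}$, the measure $\mu$ is absolutely continuous with respect to Lebesgue measure, so $\mu(\Sigma\cap K)=0$. Feeding this back into the covering estimate yields $\mathcal{H}^2(\Sigma\cap K)\le N\epsilon_0^{-1}\cdot 0=0$, and letting $K$ exhaust $\Omega$ completes the proof.

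The substantive analytic work lies entirely in Theorem~\ref{thm3}; once that criterion is in hand, Theorem~\ref{thm1} follows by the soft arguments above, so I do not expect a genuine obstacle here. The one structural point that must be respected is that the regularity criterion be expressed through a quantity — here the scaled Dirichlet energy $E(r)$, built from $|\nabla u|^2\in L_{1,\text{loc}}$ — whose associated measure is absolutely continuous with respect to Lebesgue measure. This is precisely what allows the final bootstrap from finite $\mathcal{H}^2$ to $\mathcal{H}^2=0$, and it is the reason the $E$-based Theorem~\ref{thm3} is the natural input for the dimension estimate.
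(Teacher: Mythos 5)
Your proposal is correct and is essentially the paper's own proof: the paper deduces Theorem \ref{thm1} from the $E$-based criterion of Theorem \ref{thm3} by citing ``the standard argument in the geometric measure theory'' of \cite{CKN} and \cite{Gi83}, which is precisely the argument you spell out (contrapositive of the $\epsilon$-regularity criterion, Vitali-type covering to get finite $\mathcal{H}^2$ measure, then absolute continuity of $|\nabla u|^2\,dx$ with respect to Lebesgue measure to upgrade finiteness to zero). Your identification of Theorem \ref{thm3} rather than Theorem \ref{thm4} as the natural input, for exactly the absolute-continuity reason you give, matches the paper's logic.
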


These results are in the spirit of the work of Caffarelli,
Kohn and Nirenberg \cite{CKN}, where it was proved that for any suitable weak solution $u$ to the 3D time-dependent Navier-Stokes equations the 1D Hausdorff measure of the set of singular points is equal to zero. For 5D steady-state Navier-Stokes equations, results of this type were obtained by Struwe \cite{Str88}.

\begin{remark}
The assumption of the external force $f$ in Theorems \ref{thm3}-\ref{thm1} may be relaxed. It should be possible to only assume that $f$ in $L_{p,\text{loc}}$ for some $p\in (3,6)$ or alternatively to assume that $f$ is in certain Morrey spaces. However, we do not intend to find such a minimal assumption of $f$ in this paper.
\end{remark}

\mysection{The proof}

We shall prove the main theorems in three steps.

\subsection{Step 1} In the first step, we want to
control $A$, $C$ and $D$ in a smaller ball by the their values in a larger
ball under the assumption that $E$ is sufficiently small. Here we follow the argument in \cite{DD07}, which in turn used some ideas in \cite{OL2} and \cite{flin}.  These first few estimates do not use the equation \eqref{NSeq1}.  Furthermore, in this section we often write $C(r) = C(r, x_0)$ and similarly for $A$, $D$, $E$ and $F$.

\begin{lemma}
                    \label{lemma2.28.1}
Suppose $\gamma\in (0,1)$, $\rho>0$ are constants and
$B(x_0,\rho)\subset \Omega$. Then we have
\begin{equation}\label{eq11.35}
C(\gamma\rho)\leq N\big[\gamma^{-3}E^{3/2}(\rho)
+\gamma^{-6}A^{3/4}(\rho)E^{3/4}(\rho) +\gamma^3 C(\rho)\big],
\end{equation}
where $N$ is a constant independent of $\gamma$, $\rho$ and $x_0$.
\end{lemma}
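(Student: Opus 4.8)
The plan is to estimate $C(\gamma\rho)$ purely by interpolation and mean-value comparisons, without invoking the equation \eqref{NSeq1}; the only structural input is the Sobolev inequality \eqref{eq11.30}, which in six dimensions encodes the critical embedding $H^1\hookrightarrow L_3$. I would begin by splitting the velocity on the small ball using its own average. Writing $\bar u_\gamma=[u]_{x_0,\gamma\rho}$ and $\bar u=[u]_{x_0,\rho}$, the pointwise bound $|u|^3\le N(|u-\bar u_\gamma|^3+|\bar u_\gamma|^3)$ gives
\begin{equation*}
\int_{B(x_0,\gamma\rho)}|u|^3\,dx\le N\int_{B(x_0,\gamma\rho)}|u-\bar u_\gamma|^3\,dx+N(\gamma\rho)^6|\bar u_\gamma|^3.
\end{equation*}
For the first term I would apply \eqref{eq11.30} to $w=u-\bar u_\gamma$ on $B(x_0,\gamma\rho)$ and then use the Poincar\'e inequality $\int_{B(x_0,\gamma\rho)}|w|^2\,dx\le N(\gamma\rho)^2\int_{B(x_0,\gamma\rho)}|\nabla u|^2\,dx$ to absorb the lower-order term of \eqref{eq11.30} into the gradient term. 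This yields $\int_{B(x_0,\gamma\rho)}|u-\bar u_\gamma|^3\,dx\le N(\int_{B(x_0,\rho)}|\nabla u|^2\,dx)^{3/2}=N\rho^3E^{3/2}(\rho)$, and after dividing by $(\gamma\rho)^3$ this produces the term $\gamma^{-3}E^{3/2}(\rho)$.

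The remaining term $(\gamma\rho)^6|\bar u_\gamma|^3$ I would handle by comparing the small-ball average with the large-ball average via $|\bar u_\gamma|\le|\bar u_\gamma-\bar u|+|\bar u|$. The contribution of $|\bar u|$ is elementary: Jensen's inequality gives $|\bar u|^3\le N\rho^{-3}C(\rho)$, which after rescaling contributes the term $\gamma^3 C(\rho)$. The genuinely delicate contribution is the telescoping difference, where the cross term $\gamma^{-6}A^{3/4}(\rho)E^{3/4}(\rho)$ must appear with exactly the right power of $\gamma$. Here I would estimate $|\bar u_\gamma-\bar u|\le N(\gamma\rho)^{-3}(\int_{B(x_0,\rho)}|u-\bar u|^2\,dx)^{1/2}$ by H\"older's inequality (the average over the small ball of $u-\bar u$ controlled by its $L_2$ norm over the large ball), so that the key ingredient is the interpolation bound
\begin{equation*}
\int_{B(x_0,\rho)}|u-\bar u|^2\,dx\le N\rho\Big(\int_{B(x_0,\rho)}|u|^2\,dx\Big)^{1/2}\Big(\int_{B(x_0,\rho)}|\nabla u|^2\,dx\Big)^{1/2}=N\rho^4A^{1/2}(\rho)E^{1/2}(\rho).
\end{equation*}

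I expect this interpolation inequality to be the main (though mild) obstacle, since it is what replaces the crude Poincar\'e power $E$ by the scaling-invariant product $A^{1/2}E^{1/2}$. I would prove it by writing $\|u-\bar u\|_{L_2}^2\le N\rho\|\nabla u\|_{L_2}\|u-\bar u\|_{L_2}$, applying Poincar\'e to only \emph{one} of the two factors of $\|u-\bar u\|_{L_2}$, and then using that the average is the best constant approximation in $L_2$, so that $\|u-\bar u\|_{L_2}\le\|u\|_{L_2}$ for the other factor. Feeding this bound back, the telescoping term becomes $N\gamma^{-6}A^{3/4}(\rho)E^{3/4}(\rho)$; the bookkeeping that collects the factors $(\gamma\rho)^6$, $(\gamma\rho)^{-9}$, $\rho^6$, together with the overall $(\gamma\rho)^{-3}$ from the definition of $C(\gamma\rho)$, into exactly $\gamma^{-6}$ is the only place where care with the powers is needed. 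Collecting the three contributions gives \eqref{eq11.35}.
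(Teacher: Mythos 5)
Your proof is correct, and the exponent bookkeeping checks out: the three contributions come to exactly $\gamma^{-3}E^{3/2}(\rho)$, $\gamma^{-6}A^{3/4}(\rho)E^{3/4}(\rho)$ and $\gamma^{3}C(\rho)$. However, your route differs from the paper's in a genuine way. The paper never splits $u$ around its averages; instead it splits $|u|^2$ on the small ball as $\big(|u|^2-[|u|^2]_{x_0,\rho}\big)+[|u|^2]_{x_0,\rho}$ and applies the Poincar\'e inequality to the \emph{scalar} function $|u|^2$ on the large ball, so that $\nabla(|u|^2)\sim |u||\nabla u|$ and Cauchy--Schwarz produce the cross quantity $\rho\|u\|_{L_2}\|\nabla u\|_{L_2}$ in one stroke; the Sobolev inequality \eqref{eq11.30} is then applied to $u$ itself on $B(x_0,\gamma\rho)$ to upgrade the resulting $L_2$ bound to $L_3$. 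You instead decompose $u$ around its small-ball mean, apply Sobolev--Poincar\'e to the mean-free part, and telescope the two means $[u]_{x_0,\gamma\rho}$ and $[u]_{x_0,\rho}$; your cross term comes from the interpolation inequality $\|u-[u]_{x_0,\rho}\|_{L_2}^2\le N\rho\|u\|_{L_2}\|\nabla u\|_{L_2}$, which you prove by Poincar\'e on one factor and the $L_2$-minimality of the mean on the other. The two mechanisms generate the same quantity $\rho\|u\|_{L_2}\|\nabla u\|_{L_2}$, so the estimates are of identical strength; the paper's trick is more compact (one application of Poincar\'e does everything), while yours stays entirely at the level of $u$, its means, and standard Hilbert-space facts, avoiding the chain-rule step $\nabla(|u|^2)=2u\cdot\nabla u$ for $H^1$ functions. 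Either argument proves \eqref{eq11.35}.
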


\begin{proof}
Denote $r=\gamma \rho$. By using the Poincar{\'e} inequality
 and Cauchy's inequality,
we have
\begin{multline*}
\int_{B(x_0,r)}|u|^2\,dx=\int_{B(x_0,r)}
\big(|u|^2-[|u|^2]_{x_0,\rho}\big)\,dx
+\int_{B(x_0,r)}[|u|^2]_{x_0,\rho}\,dx
\\
\leq N\rho\int_{B(x_0,\rho)}|\nabla u||u|\,dx
+\left(\frac{r}{\rho}\right)^6\int_{B(x_0,\rho)}|u|^2\,dx.
\end{multline*}
This is furthermore bounded by
\begin{multline*}
\leq N\rho\Big(\int_{B(x_0,\rho)}|\nabla u|^2\,dx\Big)^{1/2}
\Big(\int_{B(x_0,\rho)}|u|^2\,dx\Big)^{1/2}
\\
+\left(\frac{r}{\rho}\right)^6\int_{B(x_0,\rho)}|u|^2\,dx
\\
\leq N\rho^3A^{1/2}(\rho) \Big(\int_{B(x_0,\rho)}|\nabla
u|^2\,dx\Big)^{1/2} +\left(\frac{r}{\rho}\right)^6
\Big(\int_{B(x_0,\rho)}|u|^3\,dx\Big)^{2/3}\rho^{2}.
\end{multline*}
Due to the Sobolev inequality \eqref{eq11.30}, we obtain
\begin{multline*}
\int_{B(x_0,r)}|u|^3\,dx
\leq
N
\left[\left(\int_{B(x_0,r)}|\nabla u|^2\,dx\right)^{3/2}  \right.
\\
\left.
+
r^{-3} \rho^{9/2}A^{3/4}(\rho)\left(\int_{B(x_0,\rho)}|\nabla u|^2\,dx\right)^{3/4}
+\left(\frac{r}{\rho}\right)^6\int_{B(x_0,\rho)}|u|^3\,dx
\right].
\end{multline*}
The conclusion of Lemma \ref{lemma2.28.1} follows immediately.
\end{proof}

\begin{lemma}
                    \label{lemma2.28.2}
Suppose $\gamma\in (0,1/4]$ and $\rho>0$ are
constants, and $B(x_0,\rho)\subset \Omega$. Then for any $x_1\in B(x_0,\rho/4)$ we have
\begin{equation}
                                \label{eq11.30d}
D(\gamma\rho,x_1)\leq N\big[\gamma^{9/2} D(\rho)
+\gamma^{-3}E^{3/2}(\rho)+\gamma^{-3}F^{3/4}(\rho)\big],
\end{equation}
where $N$ is a constant independent of $\gamma$, $\rho$, $x_0$, and $x_1$.
\end{lemma}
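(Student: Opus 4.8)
The plan is to estimate $D(\gamma\rho,x_1)$ by means of the pressure splitting $p=\tilde p_{x_0,\rho}+h_{x_0,\rho}$ at scale $\rho$ centered at $x_0$, controlling the Poisson part $\tilde p_{x_0,\rho}$ by Calder\'on--Zygmund theory and the harmonic part $h_{x_0,\rho}$ by interior estimates for harmonic functions. The decay factor $\gamma^{9/2}$ should arise because $D$ is a scaling-invariant, normalized $L_{3/2}$ oscillation of the pressure, and for a harmonic (hence locally affine) function such a quantity decays like $\gamma^{9/2}$; indeed $|h-[h]_{x_1,\gamma\rho}|$ is of size $(\gamma\rho)|\nabla h|$ on $B(x_1,\gamma\rho)$, so the integral scales as $(\gamma\rho)^{3/2}(\gamma\rho)^6$ and division by $(\gamma\rho)^3$ leaves $\gamma^{15/2-3}=\gamma^{9/2}$. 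The two terms $\gamma^{-3}E^{3/2}(\rho)$ and $\gamma^{-3}F^{3/4}(\rho)$ will be the cost of controlling $\tilde p_{x_0,\rho}$ on the small ball $B(x_1,\gamma\rho)$ by its norm on the whole space.

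First I would record the Calder\'on--Zygmund bounds for the potentials. Writing $\tilde p_{x_0,\rho}=\tilde p^{(1)}+\tilde p^{(2)}$ according to the quadratic source $-D_{ij}\big((u_i-[u_i]_{x_0,\rho})(u_j-[u_j]_{x_0,\rho})\eta_{x_0,\rho}\big)$ and the forcing source $\Div(f\eta_{x_0,\rho})$, the $L_{3/2}(\bR^6)$ theory together with the Sobolev--Poincar\'e inequality $\|u-[u]_{x_0,\rho}\|_{L_3(B(x_0,\rho))}\le N\|\nabla u\|_{L_2(B(x_0,\rho))}$ (note $2^*=3$ in six dimensions) gives
\[
\int_{\bR^6}|\tilde p^{(1)}|^{3/2}\,dx\le N\|\nabla u\|_{L_2(B(x_0,\rho))}^{3}=N\rho^3E^{3/2}(\rho).
\]
For the forcing I would instead use $\|\nabla\tilde p^{(2)}\|_{L_2(\bR^6)}\le N\|f\eta_{x_0,\rho}\|_{L_2}\le NF^{1/2}(\rho)$ followed by $\dot H^1(\bR^6)\hookrightarrow L_3(\bR^6)$, so that $\|\tilde p^{(2)}\|_{L_3}\le NF^{1/2}(\rho)$; H\"older on any ball $B$ of radius $r\le\rho$ then gives $\int_B|\tilde p^{(2)}|^{3/2}\,dx\le Nr^3F^{3/4}(\rho)$. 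Since $\gamma\le 1/4$ and $x_1\in B(x_0,\rho/4)$ imply $B(x_1,\gamma\rho)\subset B(x_0,\rho)$, the same bounds hold for $\tilde p_{x_1,\gamma\rho}$.

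The constant subtracted in $D(\gamma\rho,x_1)$ is the mean of the \emph{small-scale} harmonic part $h_{x_1,\gamma\rho}$, which is inconvenient, so I would replace it. On $B(x_1,\gamma\rho)$ one has $h_{x_0,\rho}-h_{x_1,\gamma\rho}=\tilde p_{x_1,\gamma\rho}-\tilde p_{x_0,\rho}$, so by the triangle and Jensen inequalities the difference of the two means is bounded by $\int_{B(x_1,\gamma\rho)}\big(|\tilde p_{x_1,\gamma\rho}|^{3/2}+|\tilde p_{x_0,\rho}|^{3/2}\big)\,dx\le N\rho^3\big(E^{3/2}(\rho)+F^{3/4}(\rho)\big)$, which after division by $(\gamma\rho)^3$ costs only $N\gamma^{-3}(E^{3/2}+F^{3/4})$. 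It then suffices to bound $(\gamma\rho)^{-3}\int_{B(x_1,\gamma\rho)}|p-[h]_{x_1,\gamma\rho}|^{3/2}\,dx$ with $h=h_{x_0,\rho}$. Splitting $p-[h]_{x_1,\gamma\rho}=\tilde p_{x_0,\rho}+(h-[h]_{x_1,\gamma\rho})$, the first piece is handled by the estimates above and yields $N\gamma^{-3}(E^{3/2}+F^{3/4})$. For the harmonic piece, note $B(x_1,\gamma\rho)\subset B(x_1,\rho/4)\subset B(x_0,\rho/2)\subset B(x_0,2\rho/3)$, so $h$ is harmonic there; combining the Poincar\'e inequality on $B(x_1,\gamma\rho)$ with the interior gradient estimate
\[
\sup_{B(x_1,\gamma\rho)}|\nabla h|\le \frac{N}{\rho}\Big(\rho^{-6}\int_{B(x_0,\rho)}|h-[h]_{x_0,\rho}|^{3/2}\,dx\Big)^{2/3}
\]
produces the factor $\gamma^{9/2}$ times $\rho^{-3}\int_{B(x_0,\rho)}|h-[h]_{x_0,\rho}|^{3/2}\,dx$. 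Finally, using $h-[h]_{x_0,\rho}=(p-[h]_{x_0,\rho})-\tilde p_{x_0,\rho}$ and the Calder\'on--Zygmund bounds once more, this last quantity is at most $N\big(D(\rho)+E^{3/2}(\rho)+F^{3/4}(\rho)\big)$. Collecting the three contributions and absorbing lower-order terms through $\gamma^{9/2}\le\gamma^{-3}$ (valid since $\gamma\le 1/4$) gives \eqref{eq11.30d}.

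The main obstacle is the harmonic piece: extracting the sharp power $\gamma^{9/2}$ requires pairing the one factor of $\gamma\rho$ from Poincar\'e with the interior gradient bound and then tracking the volume factors so that exactly $\gamma^{15/2-3}$ survives. A secondary technical nuisance is the careful bookkeeping of the two centers $x_0,x_1$ and the two scales $\rho,\gamma\rho$, ensuring that every ball on which harmonicity of $h_{x_0,\rho}$ or the Sobolev embedding is invoked sits inside the correct domain, and that the mean of the small-scale harmonic part can be traded for that of the large-scale one at an admissible cost.
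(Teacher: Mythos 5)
Your proposal is correct and follows essentially the same route as the paper: the same two pressure decompositions at scales $\rho$ and $\gamma\rho$, Calder\'on--Zygmund plus Sobolev--Poincar\'e bounds for the $\tilde p$ parts, the Poincar\'e inequality combined with interior derivative estimates for the harmonic part to extract the factor $\gamma^{9/2}$, and the trade of means $[h_{x_0,\rho}]_{x_1,\gamma\rho}\leftrightarrow[h_{x_1,\gamma\rho}]_{x_1,\gamma\rho}$ via the identity $\tilde p_{x_1,\gamma\rho}+h_{x_1,\gamma\rho}=\tilde p_{x_0,\rho}+h_{x_0,\rho}$. The only (immaterial) deviation is that you bound the forcing contribution through $\|\nabla\tilde p^{(2)}\|_{L_2}\lesssim F^{1/2}$ and $\dot H^1(\bR^6)\hookrightarrow L_3$, where the paper maps $L_{6/5}\to L_{3/2}$ and applies H\"older at the end; both give $\gamma^{-3}F^{3/4}(\rho)$.
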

\begin{proof}
Denote $r=\gamma \rho$.
Recall the decomposition of $p$ introduced in \eqref{pressDECOMP}. By
using the Calder\'on-Zygmund estimate and the Sobolev-Poincar{\'e} inequality, one has
\begin{multline}                    \label{eq28.1}
\int_{B(x_1,r)}|{\tilde p}_{x_1,r}(x)|^{3/2}\,dx
\\
\leq
N\int_{B(x_1,r)} |u-[u]_{x_1,r}|^{3}\,dx+N\int_{\bR^6}|\Delta^{-1}\Div (f\eta_{x_1,r})|^{3/2}\,dx
\\
\leq N\Big(\int_{B(x_1,r)}|\nabla u|^2\,dx\Big)^{3/2}
+N\Big(\int_{B(x_1,r)}|f|^{6/5}\,dx\Big)^{5/4}.
\end{multline}
Similarly,
\begin{multline}
                            \label{eq065.23}
\int_{B(x_0,\rho)}|{\tilde p}_{x_0,\rho}|^{3/2}\,dx \leq
N\Big(\int_{B(x_0,\rho)}|\nabla u|^2\,dx\Big)^{3/2}
\\
+N\Big(\int_{B(x_0,\rho)}|f|^{6/5}\,dx\Big)^{5/4}.
\end{multline}
Since $h_{x_0,\rho}$ is harmonic in $B(x_0,2\rho/3)$, any Sobolev
norm of $h_{x_0,\rho}$ in a smaller ball can be estimated by any $L_p$ norm of $h_{x_0,\rho}$ in a larger ball.
Thus, using the Poincar{\'e}
inequality, one can obtain
\begin{multline} \notag
\int_{B(x_1,r)}|h_{x_0,\rho}-[h_{x_0,\rho}]_{x_1,r}|^{3/2}\,dx
\\
\leq Nr^{3/2}\int_{B(x_1,r)}|\nabla
h_{x_0,\rho}|^{3/2}\,dx \leq
Nr^{15/2}\sup_{B(x_1,r)}|\nabla h_{x_0,\rho}|^{3/2}.
\end{multline}
Further using the estimates for harmonic functions and the inclusion $B(x_1,r)\subset B(x_1,\rho/3)\subset B(x_0,2\rho/3)$, the above is
\begin{multline}
 \leq
N\left(\frac{r}{\rho}\right)^{15/2}
\int_{B(x_1,\rho/3)}|h_{x_0,\rho}-[h_{x_0,\rho}]_{x_0,\rho}|^{3/2}\,dx
\\
\label{eq28.2}
\leq N\left(\frac{r}{\rho}\right)^{15/2}
\int_{B(x_0,\rho)}|p-[h_{x_0,\rho}]_{x_0,\rho}|^{3/2}
+|{\tilde p}_{x_0,\rho}|^{3/2}\,dx.
\end{multline}
Combining \eqref{eq065.23} and \eqref{eq28.2} together yields
\begin{multline} \label{eq065.42}
\int_{B(x_1,r)}|p-[h_{x_0,\rho}]_{x_1,r}|^{3/2}\,dx
\leq N\Big(\int_{B(x_0,\rho)}|\nabla u|^2\,dx\Big)^{3/2}
\\
+N\Big(\int_{B(x_0,\rho)}|f|^{6/5}\,dx\Big)^{5/4}
+N\left(\frac{r}{\rho}\right)^{15/2}
\int_{B(x_0,\rho)}|p-[h_{x_0,\rho}]_{x_0,\rho}|^{3/2}\,dx.
\end{multline}
Since $\tilde p_{x_1,r}+h_{x_1,r}=p=\tilde
p_{x_0,\rho}+h_{x_0,\rho}$ in $B(x_1,r)$, by H\"older's inequality
\begin{multline}   \label{eq065.47}
\int_{B(x_1,r)}|[h_{x_0,\rho}]_{x_1,r}-[h_{x_1,r}]_{x_1,r}|^{3/2}\,dx
\\
=Nr^6|[h_{x_0,\rho}]_{x_1,r}-[h_{x_1,r}]_{x_1,r}|^{3/2}
= Nr^6|[\tilde p_{x_0,\rho}]_{x_1,r}-[\tilde
p_{x_1,r}]_{x_1,r}|^{3/2}
\\
\leq N\int_{B(x_1,r)}|{\tilde
p}_{x_0,\rho}|^{3/2}+|{\tilde p}_{x_1,r}|^{3/2}\,dx.
\end{multline}
From \eqref{eq065.42}, \eqref{eq065.47}, \eqref{eq28.1} and \eqref{eq065.23},
we get
\begin{multline*}
\int_{B(x_1,r)}|p-[h_{x_1,r}]_{x_1,r}|^{3/2}\,dx
\leq N\Big(\int_{B(x_0,\rho)}|\nabla u|^2\,dx\Big)^{3/2}
\\
+N\Big(\int_{B(x_0,\rho)}|f|^{6/5}\,dx\Big)^{5/4}
+N\left(\frac{r}{\rho}\right)^{15/2}
\int_{B(x_0,\rho)}|p-[h_{x_0,\rho}]_{x_0,\rho}|^{3/2}\,dx.
\end{multline*}
Finally, by H\"older's inequality, the lemma is proved.
\end{proof}

Note that the following estimates use the equation \eqref{NSeq1}, or more precisely they use the generalized local energy inequality \eqref{energy}.

\begin{lemma}
                    \label{lemma2.28.3}
Suppose $\theta\in (0,1/2]$ and $\rho>0$ are constants, and
$B(x_0,\rho)\subset \Omega$. Then we have
$$
A(\theta\rho)+E(\theta\rho)\leq N\theta^{-2}\big[
C^{2/3}(\rho)+C(\rho) +C^{1/3}(\rho)D^{2/3}(\rho)+F(\rho)\big].
$$
In particular, when $\theta=1/2$ we have
\begin{equation}
                    \label{eq11.36}
A(\rho/2)+E(\rho/2)
\leq N[C^{2/3}(\rho)+C(\rho)
+C^{1/3}(\rho)D^{2/3}(\rho)+F(\rho)].
\end{equation}
Here $N$ is a positive constant independent of $\theta$, $\rho$ and $x_0$.
\end{lemma}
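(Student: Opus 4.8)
\emph{Proof plan.} The plan is to reduce the whole statement to the endpoint case $\theta=1/2$, i.e.\ to \eqref{eq11.36}, by feeding a single cutoff at scale $\rho$ into the local energy inequality \eqref{energy}, and then to recover general $\theta\in(0,1/2]$ by elementary rescaling. First I would fix a smooth $\psi$ with $0\le\psi\le1$, $\psi\equiv1$ on $B(x_0,\rho/2)$, $\psi$ supported in $B(x_0,3\rho/4)\subset\Omega$, and $|\nabla\psi|\le N/\rho$, $|\Delta\psi|\le N/\rho^2$. Inserting this $\psi$ into \eqref{energy} and using $\psi\equiv1$ on $B(x_0,\rho/2)$ on the left gives
$$2\int_{B(x_0,\rho/2)}|\nabla u|^2\,dx\le\int_\Omega |u|^2\Delta\psi+\big(|u|^2+2(p-[h]_{x_0,\rho})\big)u\cdot\nabla\psi+f\cdot u\,\psi\,dx,$$
where I have already replaced $p$ by $p-[h]_{x_0,\rho}$. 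This replacement is the one genuinely important move: since $[h]_{x_0,\rho}$ is a constant, the extra contribution $2[h]_{x_0,\rho}\int u\cdot\nabla\psi\,dx=-2[h]_{x_0,\rho}\int \psi\,\operatorname{div}u\,dx=0$ by incompressibility and the compact support of $\psi$, and it is exactly this subtraction that lets the pressure term be measured by $D(\rho)$.

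Next I would bound the four terms on the right in absolute value, all integrals being over $B(x_0,\rho)$. H\"older's inequality gives $\int_{B(\rho)}|u|^2\le N\rho^2\big(\int_{B(\rho)}|u|^3\big)^{2/3}=N\rho^4C^{2/3}(\rho)$, so the $|u|^2\Delta\psi$ term is $\le N\rho^2C^{2/3}(\rho)$; the cubic term $\int|u|^2u\cdot\nabla\psi$ is directly $\le\frac{N}{\rho}\int_{B(\rho)}|u|^3=N\rho^2C(\rho)$; for the pressure term, H\"older with exponents $3/2$ and $3$ gives $\frac{N}{\rho}\int_{B(\rho)}|p-[h]_{x_0,\rho}|\,|u|\le N\rho^2C^{1/3}(\rho)D^{2/3}(\rho)$; and for the force, H\"older followed by Young's inequality gives $\int_{B(\rho)}|f|\,|u|\le F^{1/2}(\rho)\big(N\rho^4C^{2/3}(\rho)\big)^{1/2}\le N\rho^2\big(F(\rho)+C^{2/3}(\rho)\big)$. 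Dividing through by $(\rho/2)^2$ turns the left-hand side into $E(\rho/2)$ and produces precisely the four terms on the right of \eqref{eq11.36}.

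The bound for $A(\rho/2)$ needs no equation at all: by the same H\"older estimate, $A(\rho/2)=16\rho^{-4}\int_{B(\rho/2)}|u|^2\le N\rho^{-4}\cdot\rho^4C^{2/3}(\rho)=NC^{2/3}(\rho)$, which completes \eqref{eq11.36}. Finally, for general $\theta\in(0,1/2]$ I would use the inclusion $B(\theta\rho)\subset B(\rho/2)$: monotonicity of the gradient integral gives $E(\theta\rho)=(\theta\rho)^{-2}\int_{B(\theta\rho)}|\nabla u|^2\le(4\theta^2)^{-1}E(\rho/2)$, while the decisive gain for $A$ comes from applying H\"older on the small ball itself, $A(\theta\rho)\le N(\theta\rho)^{-2}\big(\int_{B(\theta\rho)}|u|^3\big)^{2/3}\le N\theta^{-2}\rho^{-2}(\rho^3C(\rho))^{2/3}=N\theta^{-2}C^{2/3}(\rho)$, so that only a factor $\theta^{-2}$ (not $\theta^{-4}$) is lost. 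Combining these with \eqref{eq11.36} yields the claimed estimate. The main obstacle is really just the pressure term, namely arranging the constant subtraction $[h]_{x_0,\rho}$ so that it is controlled by $D(\rho)$ rather than by an uncontrolled pressure average; everything else is H\"older, Young, and scaling bookkeeping.
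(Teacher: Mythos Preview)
Your argument is correct and essentially coincides with the paper's proof: the same cutoff in \eqref{energy}, the same use of $\operatorname{div}u=0$ to subtract $[h]_{x_0,\rho}$ in the pressure term, and the same H\"older/Young estimates for each of the four right-hand side contributions. The only cosmetic difference is that the paper works directly at scale $r=\theta\rho$ (observing $A(r)\le C^{2/3}(r)\le N\theta^{-2}C^{2/3}(\rho)$ and $E(r)\le r^{-2}\cdot[\text{the right-hand side of the energy inequality at scale }\rho]$), whereas you first establish the case $\theta=1/2$ and then pass to general $\theta$ via the inclusion $B(\theta\rho)\subset B(\rho/2)$; the two organizations are equivalent.
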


\begin{proof}
Let $r=\theta \rho$. By H\"older's inequality,
$$
A(r)\le C^{2/3}(r)\le N\theta^{-2}C^{2/3}(\rho).
$$
To estimate $E(r)$, in the
energy inequality \eqref{energy} we choose a
suitable smooth cut-off function $\psi=\psi_1\in C_0^\infty(B(x_0,\rho))$ such that
$$
0\leq \psi_1\leq 1\,\,\text{in}\,B(x_0,\rho),\quad
\psi_1\equiv 1 \,\,\text{in}\,B(x_0,\rho/2)
$$
\begin{equation}
                                    \label{eq17.03}
|\nabla \psi_1| \le N\rho^{-1},\quad |\nabla^2 \psi_1|\le N\rho^{-2}\,\,\text{in}\,B(x_0,\rho).
\end{equation}
By using \eqref{energy} and because $u$ is divergence free, we get
\begin{multline*}
E(r) \leq \frac{N}{r^2}\Big[\frac{1}{\rho^2}
\int_{B(x_0,\rho)}|u|^2\,dx
\\
+\frac{1}{\rho}\int_{B(x_0,\rho)}(|u|^2+2|p-[h]_{x_0,\rho}|)|u|\,dx
+\int_{B(x_0,\rho)}|u||f|\,dx\Big].
\end{multline*}
Due to the H\"older inequality and Young's inequality, one obtains
$$
\int_{B(x_0,\rho)}|u|^2\,dx\leq
\big(\int_{B(x_0,\rho)}|u|^3\,dx\big)^{2/3}
\big(\int_{B(x_0,\rho)}\,dx\big)^{1/3}\leq \rho^4C^{2/3}(\rho).
$$
And
\begin{multline*}
\int_{B(x_0,\rho)}|p-[h]_{x_0,\rho}||u|\,dx
\\
\leq
\big(\int_{B(x_0,\rho)}|p-[h]_{x_0,\rho}|^{3/2}\,dx\big)^{2/3}
\big(\int_{B(x_0,\rho)}|u|^3\,dx\big)^{1/3}
\\
\leq N\rho^3D^{2/3}(\rho)C^{1/3}(\rho).
\end{multline*}
Furthermore
$$
\int_{B(x_0,\rho)}|u||f|\,dx\le \frac{1}{\rho^2}\int_{B(x_0,\rho)}|u|^2\,dx+
\rho^2\int_{B(x_0,\rho)}|f|^2\,dx.
$$
Then, collecting these estimates, Lemma \ref{lemma2.28.3} thus follows.
\end{proof}

As a conclusion of this subsection, we obtain

\begin{prop}\label{lemma05.03.02}
For any small $\epsilon_0>0$, there exists $\epsilon_1=\epsilon_1(\epsilon_0)>0$ small  such that
for any $x_0\in \Omega$ satisfying
\begin{equation} \label{eq3.02}
\limsup_{r\downarrow 0}E(r)\leq \epsilon_1,
\end{equation}
we have
\begin{equation}
                                    \label{eq3.02.2}
A(\rho_0)+E(\rho_0)+C(\rho_0)+D(\rho_0)\leq \epsilon_0,
\end{equation}
provided that $\rho_0$ is sufficiently small.
\end{prop}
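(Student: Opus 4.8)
The plan is to run two essentially \emph{decoupled} geometric iterations, one for $C$ and one for $D$, along a fixed sequence of shrinking concentric balls $\rho_k=\gamma^k\rho_*$ centered at $x_0$, and then to recover the smallness of $A$ and $E$ for free. The starting observation is that $A$ need not be carried as an independent unknown: by H\"older's inequality $A(r)\le N C^{2/3}(r)$, and hence $A^{3/4}(\rho)\le N C^{1/2}(\rho)$. Substituting this into Lemma \ref{lemma2.28.1} replaces the mixed term $\gamma^{-6}A^{3/4}(\rho)E^{3/4}(\rho)$ by $N\gamma^{-6}C^{1/2}(\rho)E^{3/4}(\rho)$, which by Young's inequality splits as a small multiple of $C(\rho)$ plus a constant multiple of $E^{3/2}(\rho)$. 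Thus, once $\gamma$ is frozen, the recursion for $C$ closes in terms of $C$ and $E$ alone, while Lemma \ref{lemma2.28.2} taken at $x_1=x_0$ already closes the recursion for $D$ in terms of $D$, $E$ and $F$. This absorption of the cross term is the only genuinely analytic maneuver in the whole argument, and it is what removes the need to feed Lemma \ref{lemma2.28.3} back into the loop.

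Next I would fix the scaling parameter. Choose $\gamma\in(0,1/4]$ depending only on the absolute constant $N$ of Lemmas \ref{lemma2.28.1}--\ref{lemma2.28.2}, small enough that after the Young absorption both self-interaction coefficients are at most $1/2$. With $\gamma$ now frozen, the two recursions take the form
$$
C(\gamma\rho)\le \tfrac12 C(\rho)+N_1 E^{3/2}(\rho),
\qquad
D(\gamma\rho)\le \tfrac12 D(\rho)+N_2\big(E^{3/2}(\rho)+F^{3/4}(\rho)\big),
$$
where $N_1,N_2$ depend only on $N$ and $\gamma$. The hypothesis \eqref{eq3.02} furnishes a radius $r_0$ with $E(r)\le 2\epsilon_1$ for all $r<r_0$; moreover, since $f\in L_{6,\text{loc}}$, H\"older's inequality gives $F(r)\le N r^4\|f\|_{L_6(B(x_0,r))}^2\to 0$ as $r\downarrow 0$, and $F$ is monotone in $r$. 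I would then choose the base radius $\rho_*<r_0$ so small that $F(\rho_*)$ is as small as desired. Because $E$ and $F$ are evaluated on balls contained in $B(x_0,\rho_*)$, the forcing terms in both recursions are bounded, at every scale $\rho_k$, by a single quantity $M=N_1(2\epsilon_1)^{3/2}+N_2\big((2\epsilon_1)^{3/2}+F^{3/4}(\rho_*)\big)$.

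With recursions of the shape $a_{k+1}\le\tfrac12 a_k+M$ (where $a_k$ is $C(\rho_k)$ or $D(\rho_k)$), the elementary estimate $a_k\le 2^{-k}a_0+2M$ gives $\limsup_k a_k\le 2M$. Here I use that the initial values $C(\rho_*)$ and $D(\rho_*)$ are \emph{finite} for a suitable weak solution, by the Sobolev bound \eqref{eq11.30} and by $p\in L_{3/2,\text{loc}}$; this is what lets the geometric term $2^{-k}a_0$ be killed by taking $k$ large, no matter how large the initial values are. I would therefore choose first $\epsilon_1=\epsilon_1(\epsilon_0)$ small, then $\rho_*$ small, so that $2M$ is as small as required, and finally fix $k$ large enough that $2^{-k}\big(C(\rho_*)+D(\rho_*)\big)$ is negligible. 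Setting $\rho_0:=\gamma^k\rho_*$, this makes $C(\rho_0)$ small enough that both $C(\rho_0)$ and $N C^{2/3}(\rho_0)$ are at most $\epsilon_0/4$, and likewise $D(\rho_0)\le\epsilon_0/4$; then $A(\rho_0)\le N C^{2/3}(\rho_0)\le\epsilon_0/4$ and $E(\rho_0)\le 2\epsilon_1\le\epsilon_0/4$, which yields \eqref{eq3.02.2}.

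The step I expect to require the most care is the \emph{order of parameter selection}, not any single estimate: $\gamma$ must be determined purely from the structural constant $N$ (to make both contraction factors strictly less than $1$), and only afterwards may $\epsilon_1$ and then $\rho_*$ be chosen in terms of $\epsilon_0$ and the now-fixed $\gamma$. A secondary point is that, since $A(\rho_0)$ is controlled only through $C^{2/3}(\rho_0)$, one must drive $C(\rho_0)$ below $(\epsilon_0/4N)^{3/2}$ rather than merely below $\epsilon_0/4$; this is harmless because the bound on $M$ can be made arbitrarily small. One should also check that the standing hypotheses of Lemma \ref{lemma2.28.2} ($\gamma\le 1/4$ and $x_1=x_0\in B(x_0,\rho/4)$) hold at every scale of the iteration, which they do by the choice of $\gamma$.
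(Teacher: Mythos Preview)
Your argument is correct, and it takes a genuinely different route from the paper. The paper couples all four quantities into a single iteration: it invokes Lemma~\ref{lemma2.28.3} (which uses the local energy inequality~\eqref{energy}) to bound $A(\gamma\rho)+E(\gamma\rho)$ by $C(2\gamma\rho)$, $D(2\gamma\rho)$, and $F(2\gamma\rho)$, then feeds in Lemmas~\ref{lemma2.28.1} and~\ref{lemma2.28.2} to get a recursion of the form
\[
A(\gamma\rho)+E(\gamma\rho)+C(\gamma\rho)+D(\gamma\rho)\le N\gamma^{2}\big[A(\rho)+E(\rho)+C(\rho)+D(\rho)\big]+N\gamma^{2}+N\gamma^{-50}\big[E(\rho)+E^{3}(\rho)+F(\rho)\big],
\]
and iterates once $\gamma$ is fixed with $N\gamma^{2}\le 1/3$. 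You instead eliminate $A$ at the outset via the H\"older bound $A\le N C^{2/3}$, then use Young's inequality to close Lemma~\ref{lemma2.28.1} as a recursion for $C$ alone (with $E$ as forcing) and run Lemma~\ref{lemma2.28.2} as an independent recursion for $D$. This lets you avoid Lemma~\ref{lemma2.28.3} entirely: your proof of Proposition~\ref{lemma05.03.02} never invokes the generalized local energy inequality, using only the pressure equation through Lemma~\ref{lemma2.28.2}. The trade-off is that the paper's coupled scheme packages the argument in one line, whereas your decoupled scheme is more elementary and makes transparent that the smallness of $E$ alone (together with $f\in L_{6,\text{loc}}$) drives the smallness of $C$ and $D$, with $A$ and $E$ recovered for free at the end. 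Your remarks on the order of parameter selection and on the need to push $C(\rho_{0})$ below $(\epsilon_{0}/4N)^{3/2}$ are exactly the points that need care, and you have handled them correctly.
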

\begin{proof}
For a given $x_0\in \Omega$ satisfying
\eqref{eq3.02}, choose $\rho_1>0$ such that $B(x_0,\rho_1)\subset
\Omega$. Then for any $\rho\in (0,\rho_1]$, by
using \eqref{eq11.36} and Young's inequality
$$
A(\gamma\rho)+E(\gamma\rho)\leq
N[C^{2/3}(2\gamma\rho)+C(2\gamma\rho)+D(2\gamma\rho)+F(2\gamma\rho)].
$$
This estimate, \eqref{eq11.35} and \eqref{eq11.30d}, with $\gamma\in (0,1/8)$, together with
Young's inequality again implies
\begin{multline} \label{eq17.11.15}
A(\gamma\rho)+E(\gamma\rho)+C(\gamma\rho)+D(\gamma\rho)
\\
\leq
N\left[\gamma^{2}C^{2/3}(\rho)+\gamma^{9/2} D(\rho)+\gamma^3
C(\rho)+\gamma^3 A(\rho)\right]
\\
+
N\gamma^{-50}\left[ E(\rho)+E^3(\rho)+F(\rho)\right] +N\gamma^{2}
\\
\leq
N\gamma^{2}\left[A(\rho)+E(\rho)+C(\rho)+D(\rho)\right]+N\gamma^{2}
\\
+
N\gamma^{-50}\left[ E(\rho)+E^3(\rho)+F(\rho)\right].
\end{multline}
Since $f\in L_{6,\text{loc}}$, by H\"older's inequality, we have
\begin{equation}\label{eq00.20}
F(\rho)\le \|f\|_{L_6(B(x_0,\rho_1))}^{2}\rho^4.
\end{equation}
It is easy to see that for any $\epsilon_0>0$, there are sufficiently
small real numbers $\gamma\leq 1/\sqrt{3N}$ and $\epsilon_1$ such
that if \eqref{eq3.02} holds then for all small $\rho$ we have
$$
N\gamma^{2}+N\gamma^{-50}(E(\rho)+E^3(\rho)+F(\rho))<\epsilon_0/2.
$$
By using \eqref{eq17.11.15}, we reach
$$
A(\gamma\rho)+E(\gamma\rho)+C(\gamma\rho)+D(\gamma\rho)\le \frac 1 3
\left[A(\rho)+E(\rho)+C(\rho)+D(\rho)\right]+\frac {\epsilon_0} 2,
$$
which together with a standard iteration argument gives \eqref{eq3.02.2}
for some $\rho_0>0$ small enough.
\end{proof}

\subsection{Step 2}
In the second step, first we will estimate the values of $A$, $E$ and
$D$ in a smaller ball by their values in a larger ball.  Note that in this subsection all of the quantities implicitly depend upon the point $x_1$ as $A(r)=A(r,x_1)$ unless it says so otherwise.

\begin{lemma}
                    \label{lemma05.18.1}
Fix constants $\rho>0$, $\theta\in (0,1/3]$ and
$B(x_1,\rho)\subset \Omega$. Then we have
\begin{multline}
                    \label{eq18.2.31}
A(\theta\rho)+E(\theta\rho)\leq N\theta^2
A(\rho)
\\
+N\theta^{-3}\big([A(\rho)+E(\rho)]^{3/2}+D(\rho)\big)
+N\theta^{-6}F(\rho),
\end{multline}
where $N>0$ is independent of $\rho$, $\theta$ and $x_1$.
\end{lemma}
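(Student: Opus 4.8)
The plan is to estimate $A(\theta\rho)$ and $E(\theta\rho)$ separately and then add them, working on balls centred at $x_1$. For the first I would use the splitting $u=[u]_{x_1,\rho}+w$ with $w:=u-[u]_{x_1,\rho}$ and bound $\int_{B(x_1,\theta\rho)}|u|^2\le 2\int_{B(x_1,\theta\rho)}|[u]_{x_1,\rho}|^2+2\int_{B(x_1,\theta\rho)}|w|^2$. The constant part is the source of the good term: since $|[u]_{x_1,\rho}|^2\le N\rho^{-2}A(\rho)$ and $|B(\theta\rho)|\sim(\theta\rho)^6$, dividing by $(\theta\rho)^4$ it contributes exactly $N\theta^2A(\rho)$. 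The fluctuation part I would control by H\"older followed by the Sobolev--Poincar\'e inequality (recall $2^\ast=3$ in six dimensions): $\int_{B(x_1,\theta\rho)}|w|^2\le N(\theta\rho)^2\big(\int_{B(x_1,\rho)}|w|^3\big)^{2/3}\le N(\theta\rho)^2\int_{B(x_1,\rho)}|\nabla u|^2$, feeding the energy-type remainder.

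For $E(\theta\rho)$ I would test the local energy inequality \eqref{energy} with a cut-off $\psi$ equal to $1$ on $B(x_1,\theta\rho)$, supported in $B(x_1,2\theta\rho)$, satisfying $|\nabla\psi|\le N(\theta\rho)^{-1}$ and $|\nabla^2\psi|\le N(\theta\rho)^{-2}$. Since $\operatorname{div}u=0$, I may subtract from $|u|^2$ and from $p$ their averages inside the convective and pressure terms. Estimating as in Lemma \ref{lemma2.28.3} but tracking the scale $\theta\rho$: the convective term $\int|u|^2u\cdot\nabla\psi$ gives $N\theta^{-3}C(\rho)$; the pressure term, using the decomposition \eqref{pressDECOMP} and H\"older, gives $N\theta^{-3}C^{1/3}(\rho)D^{2/3}(\rho)$; and the forcing term, after a weighted Young inequality chosen to balance the $\theta$-powers, gives $N\theta^{2}A(\rho)+N\theta^{-6}F(\rho)$. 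With these in hand I would invoke the Sobolev inequality \eqref{eq11.30} in the scaled form $C(\rho)\le N[A(\rho)+E(\rho)]^{3/2}$, together with Young's inequality (e.g. $C^{1/3}D^{2/3}\le N(C+D)$), to repackage everything into the right-hand side of \eqref{eq18.2.31}.

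The delicate point, and the one I expect to be the main obstacle, is the quadratic term $\int|u|^2\Delta\psi$. Unlike the three-dimensional time-dependent setting, where one uses a backward-caloric test function so that the analogue $(\partial_t+\Delta)\phi$ nearly annihilates this term, in the stationary problem $\int|u|^2\Delta\psi$ cannot be removed, and the naive bound $\tfrac{N}{(\theta\rho)^2}\int_{B(x_1,2\theta\rho)}|u|^2$ only returns a harmful coarse-scale contribution. The key is to subtract the velocity average and split $\int|u|^2\Delta\psi=\int(2[u]_{x_1,\rho}\cdot w+|w|^2)\Delta\psi$, the constant $|[u]_{x_1,\rho}|^2$ integrating against $\Delta\psi$ to zero. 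For the cross term $2[u]_{x_1,\rho}\cdot w$ one integrates by parts and substitutes the momentum equation $\Delta u=\operatorname{div}(u\otimes u)+\nabla p-f$, trading it for convective, pressure and forcing contributions of precisely the admissible orders $N\theta^{-3}[A+E]^{3/2}$, $N\theta^{-3}D$ and $N\theta^{-6}F$. Arranging the remaining genuinely quadratic piece $\int|w|^2\Delta\psi$ so that it too is subsumed into the good $\theta^2A(\rho)$ term and the super-linear remainders, rather than leaving an unabsorbable term linear in $E(\rho)$, is where the six-dimensional scaling must be exploited sharply, and is the step I would expect to require the most care.
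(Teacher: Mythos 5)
There is a genuine gap, and it sits exactly where you predicted trouble --- but it also contaminates the part of your argument you treated as unproblematic. Your estimate of $A(\theta\rho)$ via H\"older and Sobolev--Poincar\'e gives, for the fluctuation $w=u-[u]_{x_1,\rho}$,
\[
(\theta\rho)^{-4}\int_{B(x_1,\theta\rho)}|w|^2\,dx
\le N(\theta\rho)^{-2}\Big(\int_{B(x_1,\rho)}|w|^3\,dx\Big)^{2/3}
\le N\theta^{-2}E(\rho),
\]
and the residual quadratic piece $\int|w|^2\Delta\psi$ in your treatment of $E(\theta\rho)$ yields the same bound $N\theta^{-2}E(\rho)$ after dividing by $(\theta\rho)^2$. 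A term that is \emph{linear} in $E(\rho)$ with the \emph{large} factor $\theta^{-2}$ does not appear on the right-hand side of \eqref{eq18.2.31} and cannot be absorbed into it: in the regime where the lemma is applied (all scale-invariant quantities small), an inequality of the form $\theta^{-2}E(\rho)\le N\theta^{-3}[A(\rho)+E(\rho)]^{3/2}$ is false, and the iterations in Lemma \ref{lemma05.3.1} and Proposition \ref{lemma05.03.18.6} hinge on every linear term carrying a small coefficient such as $\theta^2$. So your scheme, as outlined, proves a strictly weaker inequality that cannot serve the decay argument. Your plan of substituting the momentum equation does handle the cross term $2[u]_{x_1,\rho}\cdot w$ (which indeed comes out superlinear, of order $\theta^{-1}[A+E]^{3/2}$ plus pressure and force terms), but the piece you admit you cannot arrange is precisely the fatal one, and no rearrangement of Poincar\'e-type inequalities will fix it, since any such bound converts $|w|^2$ at the small scale into $E(\rho)$ linearly.

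The paper avoids this entirely with the one device missing from your proposal: the stationary analogue of the backward heat kernel. It tests \eqref{energy} with $\psi=\psi_1\psi_2$, where $\psi_1$ is the usual cut-off at scale $\rho$ and $\psi_2(x)=(r^2+|x-x_1|^2)^{-2}$ with $r=\theta\rho$, a regularization of the 6D fundamental solution $|x-x_1|^{-4}$. Then $\Delta\psi_2=-24r^2(r^2+|x-x_1|^2)^{-4}<0$ on all of $\bR^6$, while $-\Delta\psi_2\ge cr^{-6}$ and $\psi_2\ge cr^{-4}$ on $B(x_1,r)$. Consequently the quadratic term $\int|u|^2\psi_1\Delta\psi_2$ is never estimated at all: it has a favorable sign, is moved to the left-hand side, and is exactly what produces $A(\theta\rho)$ there --- so no Poincar\'e step is needed for $A$, which eliminates your linear $E(\rho)$ term at the source. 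The only surviving $|u|^2$-contribution on the right is $\int|u|^2(\psi_2\Delta\psi_1+2\nabla\psi_1\cdot\nabla\psi_2)$, supported in the annulus at scale $\rho$ where the weight is bounded by $N\rho^{-6}$, and this gives the benign $N\theta^2A(\rho)$. The cubic, pressure and force terms are then handled essentially as you describe: your weighted Young inequality producing $\theta^2A(\rho)+\theta^{-6}F(\rho)$ and the final use of \eqref{eq4.48} with Young's inequality do match the paper's conclusion of the proof.
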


\begin{proof}
We prove the lemma by using a suitably chosen test function in the generalized local energy inequality \eqref{energy}. Let $r=\theta\rho$. We define
$$
\psi_2(x)=(r^2+|x-x_1|^2)^{-2},
$$
which clearly satisfies $\Delta \psi_2 = - 24 r^2 (r^2+|x-x_1|^2)^{-4}$ so that
\begin{equation}
                        \label{eq16.40}
\Delta \psi_2  <0\quad\text{in}\quad \bR^6,\quad
\Delta \psi_2\le -cr^{-6}\quad\text{in}\quad B(x_1,\rho),
\end{equation}
for some constant $c>0$ independent of $r$.

In the energy inequality \eqref{energy} we choose
$\psi=\psi_1\psi_2$, where $\psi_1$ is taken
from \eqref{eq17.03} in the proof of Lemma \ref{lemma2.28.3} with the center $x_1$ in place of $x_0$.
Then we have
\begin{multline}
-\int_{B(x_1,\rho)}|u|^2\psi_1\Delta\psi_2\,dx
+2\int_{B(x_1,\rho)}|\nabla u|^2\psi_1\psi_2\,dx \\
                    \label{eq2.54}
\leq \int_{B(x_1,\rho)}\{|u|^2(\psi_2\Delta
\psi_1+2\nabla\psi_1\cdot\nabla \psi_2)\\
+(|u|^2+2(p-[h]_{x_1,\rho}))u\cdot (\psi_1\nabla\psi_2+\psi_2\nabla\psi_1)+f\cdot u\psi_1\psi_2\}\,dx.
\end{multline}
After some straightforward computations, from \eqref{eq17.03} and \eqref{eq16.40},
it is easy to see the following properties:
\begin{itemize}
\item[(i)] For some constant $c>0$, on $\bar B(x_1,r)$ it holds that
$$
\psi_1\psi_2=\psi_2\geq cr^{-4},\quad -\psi_1\Delta\psi_2=-\Delta\psi_2\ge cr^{-6}.
$$
\item[(ii)]  In  $B(x_1,\rho)$, we have
$$
|\psi_1\psi_2|\le Nr^{-4},\quad
|\psi_1\nabla\psi_2|+|\psi_2\nabla\psi_1|\leq Nr^{-5},
$$
$$
|\psi_2\Delta\psi_1|+|\nabla\psi_1\cdot\nabla\psi_2|
\leq N\rho^{-6}.
$$
\end{itemize}
These properties together with \eqref{eq2.54}, the Young and H{\"o}lder inequalities,
yield
\begin{equation}
                                    \label{eq4.47}
A(r)+E(r)\leq N[\theta^2 A(\rho) +\theta^{-3}(C(\rho)+D(\rho))+\theta^{-6} F(\rho)].
\end{equation}
Owing to the Sobolev inequality \eqref{eq11.30}, one easily gets
\begin{equation}
                                    \label{eq4.48}
C(\rho)\leq N[A(\rho)+E(\rho)]^{3/2}.
\end{equation}
Upon combining \eqref{eq4.47} and
\eqref{eq4.48}, the lemma is proved.
\end{proof}

\begin{lemma}\label{lemma05.3.1}
Suppose $\rho>0$ is constant and $B(x_1,\rho)\subset \Omega$. Then we
can find a $\theta_1\in (0,1)$  small, where $\theta_1$ does not depend upon $\rho$,  such that
\begin{multline}\label{eq5.3.1}
A(\theta_1\rho)+E(\theta_1\rho)+D^{2/3}(\theta_1\rho)\leq
\frac{1}{4}\big[A(\rho)+E(\rho)+D^{2/3}(\rho)\big]
\\
+N(\theta_1)\big[A(\rho)+E(\rho)+D^{2/3}(\rho)\big]^{3/2}
+N(\theta_1)\big[F(\rho)+F^{1/2}(\rho)\big],
\end{multline}
where $N$ is a constant independent of $\rho$ and $x_1$.
\end{lemma}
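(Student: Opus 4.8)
The plan is to prove a one‑step geometric decay for the combined scaling‑invariant quantity $G(\rho):=A(\rho)+E(\rho)+D^{2/3}(\rho)$ by feeding the velocity estimate of Lemma \ref{lemma05.18.1} and a pressure‑decay estimate into one another. Throughout I write $r=\theta_1\rho$, with all quantities centered at $x_1$. The relevant bookkeeping is that $A$, $E$ and $D^{2/3}$ are homogeneous of the same degree in the size of the solution, while $C$, $D$, and products such as $[A+E]^{3/2}$ are of strictly higher degree; this is what decides which terms are ``linear'' (and must be controlled by $\tfrac14 G(\rho)$) and which are genuinely higher order (and may be absorbed into $N(\theta_1)G^{3/2}(\rho)$).

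First I would extract the velocity part directly from Lemma \ref{lemma05.18.1} with $\theta=\theta_1$. Since $[A(\rho)+E(\rho)]^{3/2}\le G^{3/2}(\rho)$ and $D(\rho)=\big(D^{2/3}(\rho)\big)^{3/2}\le G^{3/2}(\rho)$, this gives
\begin{equation*}
A(\theta_1\rho)+E(\theta_1\rho)\le N\theta_1^{2}A(\rho)+N\theta_1^{-3}G^{3/2}(\rho)+N\theta_1^{-6}F(\rho).
\end{equation*}
Here the only genuinely linear term is $N\theta_1^{2}A(\rho)$, whose coefficient is small once $\theta_1$ is small; crucially there is no linear $E(\rho)$ or $D^{2/3}(\rho)$ term. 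The harder half is to produce a matching decay for $D^{2/3}(\theta_1\rho)$.

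The main obstacle is exactly the pressure term. The pressure estimate of Lemma \ref{lemma2.28.2} (with $x_0=x_1$), after raising both sides to the power $2/3$ and using subadditivity of $t\mapsto t^{2/3}$, reads
\begin{equation*}
D^{2/3}(\theta_1\rho)\le N\theta_1^{3}D^{2/3}(\rho)+N\theta_1^{-2}E(\rho)+N\theta_1^{-2}F^{1/2}(\rho).
\end{equation*}
The self‑decay factor $\theta_1^{3}$ is harmless, but the cross term $N\theta_1^{-2}E(\rho)$ is linear in $E$ at the \emph{outer} scale with a coefficient that blows up as $\theta_1\downarrow0$. Thus shrinking $\theta_1$ to beat the $A$‑ and $D^{2/3}$‑self‑decay simultaneously only makes this term worse, and no equal weighting of the three quantities can turn the sum into a contraction. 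This is the crux of the lemma.

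The resolution I would pursue is to sharpen the pressure decay so that the non‑decaying velocity contribution is kept at the \emph{inner} scale $\theta_1\rho$. Writing $p=\tilde p_{x_1,\theta_1\rho}+h_{x_1,\theta_1\rho}$, the Calder\'on--Zygmund/Sobolev--Poincar\'e bound controls the local part by $E^{3/2}(\theta_1\rho)$ at that same small scale, while the harmonic far‑field part enters only through its \emph{oscillation} over $B(x_1,\theta_1\rho)$, which by interior estimates for harmonic functions decays like $\theta_1^{9/2}$. Keeping the mean‑subtractions throughout, rather than the crude domain enlargement used to state Lemma \ref{lemma2.28.2}, should yield
\begin{equation*}
D^{2/3}(\theta_1\rho)\le N E(\theta_1\rho)+N\theta_1^{3}\big(D^{2/3}(\rho)+E(\rho)\big)+N\theta_1^{-2}F^{1/2}(\rho).
\end{equation*}
Now the only non‑decaying velocity term is $E(\theta_1\rho)$, at the inner scale, which is itself controlled by the velocity display above. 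Adding the two estimates and bounding $A(\theta_1\rho)+E(\theta_1\rho)$ together with the extra $NE(\theta_1\rho)$ by Lemma \ref{lemma05.18.1}, every surviving linear coefficient is either $N\theta_1^{2}$ or $N\theta_1^{3}$; choosing $\theta_1$ small makes each at most $\tfrac14$, and all remaining terms collapse into $N(\theta_1)G^{3/2}(\rho)+N(\theta_1)\big[F(\rho)+F^{1/2}(\rho)\big]$, which is the assertion. The decisive and most delicate step is therefore the sharp pressure estimate: extracting the genuine $\theta_1^{9/2}$ decay of the far‑field pressure oscillation while tying the local pressure to $E$ at the inner scale.
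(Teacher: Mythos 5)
You correctly isolate the crux --- in the pressure lemma the dangerous term is the linear cross term $N\gamma^{-2}E$ whose coefficient blows up as the scale ratio shrinks, while Lemma \ref{lemma05.18.1} has no linear $E(\rho)$ or $D^{2/3}(\rho)$ term --- and your final assembly would indeed close the argument \emph{if} your ``sharpened'' pressure estimate were available. But that estimate,
\begin{equation*}
D^{2/3}(\theta_1\rho)\le N E(\theta_1\rho)+N\theta_1^{3}\big(D^{2/3}(\rho)+E(\rho)\big)+N\theta_1^{-2}F^{1/2}(\rho),
\end{equation*}
is false, and no argument using only the pressure equation $\Delta p=-D_{ij}(u_iu_j)+\Div f$ and $\Div u=0$ (Calder\'on--Zygmund bounds plus interior estimates for harmonic functions, which is all your sketch invokes) can produce it. Take $f=0$, $x_1=0$, let $v$ be a smooth divergence-free field supported in the shell $\{3\theta\rho<|x|<6\theta\rho\}$ with $|v|\sim\theta^{-2}\rho^{-1}$, $|\nabla v|\sim\theta^{-3}\rho^{-2}$, and set $u=v$, $p=-\Delta^{-1}D_{ij}(v_iv_j)$. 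Then $E(\rho)\sim 1$, $D(\rho)\le N$, and $E(\theta\rho)=0$ since $u\equiv 0$ on $B(3\theta\rho)$; moreover $\tilde p_{0,\theta\rho}=0$, so $D(\theta\rho)$ is exactly the normalized oscillation of $p$ on $B(\theta\rho)$, which for generic $v$ is of amplitude $|v|^2\sim\theta^{-4}\rho^{-2}$, giving $D^{2/3}(\theta\rho)\sim\theta^{-2}$. Your inequality would then force $\theta^{-2}\le N\theta^{3}$, absurd for small $\theta$. The obstruction is the nonlocality of the pressure: a quadratic source sitting in an annulus at distance $\sim\theta\rho$ creates pressure oscillation on $B(\theta\rho)$ that is controlled neither by the local energy $E(\theta\rho)$ (zero here) nor by any bounded, let alone decaying, multiple of outer-scale quantities. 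Concretely, the decomposition scale fixes simultaneously where the Calder\'on--Zygmund bound lives and where the harmonic remainder is harmonic, so one cannot have the local part estimated at the inner scale \emph{and} the far-field part harmonic at the outer scale; this is why Lemma \ref{lemma2.28.2} has the form it has, with the $\gamma^{-2}E$ term unavoidable.

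The paper's proof closes the gap by a genuinely different device: it uses \emph{three} scales $\gamma\theta\rho\subset\theta\rho\subset\rho$. One applies Lemma \ref{lemma2.28.2} from $\theta\rho$ down to $\gamma\theta\rho$, so the bad term is $N\gamma^{-2}E(\theta\rho)$ at the intermediate scale, and then controls $E(\theta\rho)$ by Lemma \ref{lemma05.18.1} --- this is where the energy inequality \eqref{energy}, not the pressure equation, does the work --- whose only linear term is $N\theta^{2}A(\rho)$ with a \emph{decaying} coefficient. The surviving linear coefficients are then $N\gamma^{3}\theta^{-2}$ (from $D^{2/3}(\theta\rho)\le\theta^{-2}D^{2/3}(\rho)$), $N\gamma^{-2}\theta^{2}$, and $N(\gamma\theta)^{2}$, and the choice $\gamma=\theta^{4/5}$ makes all of them of order $\theta^{2/5}$; the lemma follows with $\theta_1=\gamma\theta$. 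Note that your counterexample pair does not satisfy \eqref{energy}, which is consistent with this structure: Lemma \ref{lemma05.3.1} genuinely requires the energy inequality, so any correct proof must route the energy cross term through Lemma \ref{lemma05.18.1} rather than attempt to remove it from the pressure estimate.
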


\begin{proof}
Due to \eqref{eq11.30d} and \eqref{eq18.2.31},
for any $\gamma,\theta\in (0,1/4]$, we have
$$
D^{2/3}(\gamma\theta\rho)\leq
N\big[\gamma^{3}D^{2/3}(\theta\rho)
+\gamma^{-2}F^{1/2}(\theta\rho)
+\gamma^{-2}E(\theta\rho)
\big]
$$
$$
\leq N\gamma^{3}\theta^{-2}D^{2/3}(\rho)+N\gamma^{-2}F^{1/2}(\rho)
+N\gamma^{-2}\theta^2 A(\rho)
$$
\begin{equation}
                                \label{eq6.18}
+N\gamma^{-2}\theta^{-3}
\big[A(\rho)+E(\rho)+D^{2/3}(\rho)\big]^{3/2} +N\gamma^{-2}\theta^{-6}F(\rho),
\end{equation}
and from \eqref{eq18.2.31} we have
\begin{multline}
                    \label{eq18.6.25}
A(\gamma\theta\rho)+E(\gamma\theta\rho)
\leq N(\gamma\theta)^2
A(\rho)
\\
+N(\gamma\theta)^{-3}[A(\rho)+E(\rho)+D^{2/3}(\rho)]^{3/2}
+N(\gamma\theta)^{-6} F(\rho).
\end{multline}
Now we choose and fix $\theta$ sufficiently small and $\gamma=\theta^{4/5}$
such that
$$
N[\gamma^3\theta^{-2}+\gamma^{-2}\theta^2+(\gamma\theta)^2]\le N\theta^{2/5}\leq
1/8.
$$
Upon adding \eqref{eq6.18} and \eqref{eq18.6.25}, we obtain
\begin{multline*}
A(\gamma\theta\rho)+E(\gamma\theta\rho)+D^{2/3}(\gamma\theta\rho)
\\
\leq
\frac{1}{4}\left[ A(\rho)+D^{2/3}(\rho) \right]
+
N[A(\rho)+E(\rho)+D^{2/3}(\rho)]^{3/2}
\\
+N\big[F(\rho)+F^{1/2}(\rho)\big],
\end{multline*}
where $N$ depends only on $\theta$ and $\gamma$. After putting
$\theta_1=\gamma\theta$, the lemma is proved.
\end{proof}

In the next proposition we
will study 
the decay property of $A$, $E$, $C$ and $D$ as the radius $\rho$ goes to zero.

\begin{prop}\label{lemma05.03.18.6}
There exists $\epsilon_0>0$ satisfying the following property.
Suppose that for some $x_0\in \Omega$ and $\rho_0\in (0,1)$
satisfying $B(x_0,\rho_0)\subset \Omega$ we have
\begin{equation}\label{eq3.02.2.b.b}
C(\rho_0,x_0)+D(\rho_0,x_0)+F(\rho_0,x_0)\leq \epsilon_0.
\end{equation}
Then we can find $N>0$ and $\alpha_0\in (0,1)$ such that for any $\rho\in (0,\rho_0/8)$ and $x_1\in B(x_0,\rho_0/8)$, the following inequality will hold uniformly
\begin{equation}\label{eq18.6.48}
A(\rho,x_1)+E(\rho,x_1)
+C^{2/3}(\rho,x_1)
+D^{2/3}(\rho,x_1)\leq N\rho^{\alpha_0},
\end{equation}
where $N$ is a positive constant independent of $\rho$ and $x_1$.
\end{prop}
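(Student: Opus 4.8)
The plan is to obtain \eqref{eq18.6.48} by iterating the decay estimate \eqref{eq5.3.1} of Lemma \ref{lemma05.3.1} for the scaling invariant quantity $\Phi(\rho,x_1)\eqdef A(\rho,x_1)+E(\rho,x_1)+D^{2/3}(\rho,x_1)$, run from a fixed scale $r_\ast=\rho_0/8$ down to $\rho\downarrow 0$ and uniformly in $x_1\in B(x_0,\rho_0/8)$. Since \eqref{eq5.3.1} has the form $\Phi(\theta_1\rho)\le \tfrac14\Phi(\rho)+N\Phi^{3/2}(\rho)+N[F(\rho)+F^{1/2}(\rho)]$ at the center $x_1$, the whole argument reduces to three points: a uniform smallness $\Phi(r_\ast,x_1)\le\eta$ of the initial datum; the absorption of the cubic term; and the decay of the forcing, which produces the exponent $\alpha_0$.

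First I would record the elementary consequences of the hypothesis \eqref{eq3.02.2.b.b}. Applying \eqref{eq11.36} at $x_0$ with $\rho=\rho_0$, together with $A\le NC^{2/3}$, gives $A(\rho_0/2,x_0)+E(\rho_0/2,x_0)\le N\epsilon_0^{2/3}$. Because $A$ and $E$ increase with the ball and change only by a bounded factor under the scaling, the inclusion $B(x_1,\rho_0/8)\subset B(x_0,\rho_0/2)$ transfers this to $A(\rho_0/8,x_1)+E(\rho_0/8,x_1)\le N\epsilon_0^{2/3}$ for every $x_1\in B(x_0,\rho_0/8)$; the same inclusions give $C(\rho_0/8,x_1)\le N\epsilon_0$ and, trivially, $F(\rho_0/8,x_1)\le \epsilon_0$.

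The delicate quantity is $D$: the pressure decomposition \eqref{pressDECOMP} is centered, and the shift estimate \eqref{eq11.30d} of Lemma \ref{lemma2.28.2} feeds on $E$ at the full input scale, which is not available at scale $\rho_0$. I would therefore apply \eqref{eq11.30d} with input scale $\rho_0/2$, where $E$ is already small, reducing the matter to bounding $D(\rho_0/2,x_0)$. This is the main obstacle, and I expect to settle it by comparing $D$ with the true pressure oscillation. Using the elementary inequality $\int_B|p-[p]_B|^{3/2}\le N\int_B|p-c|^{3/2}$ valid for every constant $c$, the oscillation $\int_{B(x_0,\rho)}|p-[p]_{x_0,\rho}|^{3/2}$ is comparable to $\rho^3D(\rho,x_0)$ and is monotone in $\rho$; splitting $p=\tilde p_{x_0,\rho_0/2}+h_{x_0,\rho_0/2}$ and bounding the $\tilde p$ part by Calder\'on--Zygmund, which on scale $\rho_0/2$ only sees the small quantities $E(\rho_0/2,x_0)$ and $F(\rho_0/2,x_0)$, this chain gives $D(\rho_0/2,x_0)\le N\epsilon_0^{3/4}$, and hence $D(\rho_0/8,x_1)\le N\epsilon_0^{3/4}$ by \eqref{eq11.30d}. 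Altogether $\Phi(\rho_0/8,x_1)\le N\epsilon_0^{1/2}$ uniformly in $x_1$.

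With the uniform initial smallness secured, I would close the iteration. Fix a threshold $\eta>0$ with $N\eta^{1/2}\le\tfrac14$, where $N$ is the constant in \eqref{eq5.3.1}, and then take $\epsilon_0$ so small that $N\epsilon_0^{1/2}\le\eta$ and $N(\epsilon_0+\epsilon_0^{1/2})\le\tfrac14\eta$. Then $\Phi(\rho_0/8,x_1)\le\eta$, the cubic term in \eqref{eq5.3.1} is absorbed since $N\Phi^{3/2}\le\tfrac14\Phi$ whenever $\Phi\le\eta$, while the forcing term stays $\le\tfrac14\eta$ thanks to the monotone bound $F(\rho,x_1)\le F(\rho_0,x_0)\le\epsilon_0$ coming from \eqref{eq3.02.2.b.b}. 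An induction on $k$ then yields $\Phi(\theta_1^k r_\ast,x_1)\le\eta$ for all $k$, together with the linear recursion $\Phi(\theta_1\rho)\le \tfrac12\Phi(\rho)+N[F(\rho)+F^{1/2}(\rho)]$ for all $\rho\le r_\ast$. Finally, inserting the decay $F(\rho)\le N\rho^4$ furnished by $f\in L_{6,\mathrm{loc}}$ as in \eqref{eq00.20}, so that $F+F^{1/2}\le N\rho^2$, and invoking the standard iteration lemma for $\Phi(\theta_1\rho)\le \tfrac12\Phi(\rho)+N\rho^2$, gives $\Phi(\rho,x_1)\le N\rho^{\alpha_0}$ for some $\alpha_0\in(0,1)$ at the dyadic scales, with interpolation over intermediate radii costing only a larger $N$. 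The remaining term $C^{2/3}$ is recovered from the Sobolev inequality \eqref{eq4.48}, which gives $C^{2/3}(\rho,x_1)\le N(A+E)(\rho,x_1)\le N\Phi(\rho,x_1)$, and \eqref{eq18.6.48} follows.
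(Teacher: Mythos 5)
Your proposal follows the same overall strategy as the paper's proof: establish uniform smallness of $\Phi=A+E+D^{2/3}$ at the starting scale $\rho_0/8$ for every center $x_1\in B(x_0,\rho_0/8)$, absorb the cubic term in \eqref{eq5.3.1} by a threshold choice of $\eta$ and $\epsilon_0$, iterate the resulting linear recursion with the forcing decay $F(\rho)+F^{1/2}(\rho)\le N\rho^2$ from \eqref{eq00.20} to get $\Phi(\theta_1^k\rho_0/8)\le N\theta_1^{k\alpha_0}$, and recover $C^{2/3}$ from \eqref{eq4.48}. In fact your treatment of the initial smallness of $D$ is \emph{more} careful than the paper's: the paper merely cites \eqref{eq11.36} and \eqref{eq11.30d}, yet \eqref{eq11.30d} applied with input scale $\rho_0$ would require $E(\rho_0,x_0)$ to be small, which is not part of hypothesis \eqref{eq3.02.2.b.b}, and applied with input scale $\rho_0/2$ it requires $D(\rho_0/2,x_0)$, which is not trivially dominated by $D(\rho_0,x_0)$ because the subtracted constant in the definition of $D$ changes with the radius. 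Your detour through the genuinely monotone pressure oscillation $\int|p-[p]|^{3/2}$, with the Calder\'on--Zygmund bound evaluated only at scale $\rho_0/2$ where $E$ has already been made small by \eqref{eq11.36}, fills in exactly this point.

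The one step of your write-up that would fail as literally stated is the passage from dyadic scales to arbitrary $\rho$: you assert that interpolation over intermediate radii "costs only a larger $N$". That is true for $A$, $E$ and $C$, which under inclusion of balls are monotone up to a factor depending only on $\theta_1$, but it is false for $D^{2/3}$, and for the same reason you yourself identified at the initial scale: the constant $[h_{x_1,\rho}]_{x_1,\rho}$ in $D(\rho,x_1)$ depends on $\rho$, so $D(\rho,x_1)$ is not controlled by $N(\theta_1)\,D(\theta_1^{k-1}\rho_0/8,x_1)$ by inclusion alone. The paper flags precisely this issue (``we do not make the standard assumption that $\varphi(\rho)$ should be a non-decreasing function'') and resolves it by invoking Lemma \ref{lemma2.28.2} once more at the intermediate radius, which produces the harmless extra term $F^{1/2}(\theta_1^{k-1}\rho_0/8,x_1)\le N\rho^2$; your oscillation comparison would serve equally well. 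So the omission is a gap in rigor rather than in ideas, repaired by a tool you already deployed elsewhere, but the final interpolation step does require this additional argument for the $D$ term before \eqref{eq18.6.48} can be concluded.
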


\begin{proof}
Fix the constant $\theta_1$ from Lemma \ref{lemma05.3.1}. Due to
\eqref{eq11.36},
 \eqref{eq11.30d}
 and \eqref{eq3.02.2.b.b}, we may first choose $\epsilon'>0$ then $\epsilon_0=\epsilon_0(\epsilon')>0$ sufficiently small such that, for any $x_1\in B(x_0,\rho_0/8)$,
$$
A(\rho_0/4,x_0)+E(\rho_0/4,x_0)+D^{2/3}(\rho_0/8,x_1)
\leq \frac{\epsilon'}{16},
$$
and
\begin{equation}\label{eq9.58}
N(\theta_1)\sqrt{\epsilon'} \leq 1/4,\quad N(\theta_1)(\epsilon_0+\epsilon_0^{1/2})\le \epsilon'/2.
\end{equation}
where  $N(\theta_1)>0$ is the same constant from
\eqref{eq5.3.1}. By using
$$
B(x_1,\rho_0/8)\subset B(x_0,\rho_0/4)\subset \Omega,
$$
we then have
$$
\varphi(\rho_0)
:=
A(\rho_0/8,x_1)+E(\rho_0/8,x_1)+D^{2/3}(\rho_0/8,x_1)\leq \epsilon'.
$$
By using \eqref{eq9.58} and \eqref{eq5.3.1} with $\rho = \rho_0/8$ we obtain inductively that
$$
\varphi(\theta_1^k \rho_0)
=
A(\theta_1^k\rho_0/8,x_1)+E(\theta_1^k\rho_0/8,x_1)
+D^{2/3}(\theta_1^k\rho_0/8, x_1)
\le \epsilon'.
$$
(Holding for $k=1, 2, \ldots$).
It then similarly follows from \eqref{eq9.58} and \eqref{eq5.3.1} that
\begin{equation}
\label{to.iterate}
\varphi(\theta_1^k \rho_0)
\le \frac{1}{2} \varphi(\theta_1^{k-1} \rho_0)
+N_1 (\theta_1^{k-1} \rho_0)^2.
\end{equation}
Above, thanks to \eqref{eq00.20}, we have used the estimate
$$
F(\theta_1^{k-1}\rho_0/8,x_1)+F^{1/2}(\theta_1^{k-1}\rho_0/8,x_1)
\le
N_1(\|f\|_{L_6(B(x_0,\rho_0/2))}) ~  (\theta_1^{k-1} \rho_0)^2.
$$
Now we use a standard iteration argument to obtain the H{\"o}lder continuity of $\varphi$.   We have to be a bit careful however because we do not make the standard assumption that $\varphi(\rho)$ should be a non-decreasing function.  We iterate \eqref{to.iterate} to obtain
\begin{multline}            \label{eq21.3.10}
\varphi(\theta_1^k \rho_0)
\le
\left(\frac{1}{2}\right)^k \varphi( \rho_0)
+
N_1 \rho_0^2 \sum_{j=0}^{k-1} \left(\frac{1}{2}\right)^j (\theta_1^{k-1-j} )^2
\\
\le
\left(\frac{1}{2}\right)^k \left[\varphi( \rho_0)
+
\frac{2N_1}{1-\theta_1} \rho_0^2 \right].
\end{multline}
In the last inequality, without loss of generality we have used that $\theta_1\in(0,1/2]$.
Since $\rho \in (0, \rho_0/32)$ we can find $k$ such that
$
\theta_1^{k} \frac{\rho_0}{8}
<
4\rho
\le
\theta_1^{k-1} \frac{\rho_0}{8}.
$
Then
$$
A(\rho,x_1)+E(\rho,x_1)+D^{2/3}(\rho,x_1)
\le
N(\theta_1)\big(\varphi(\theta_1^{k-1} \rho_0)+F^{1/2}(\theta_1^{k-1} \rho_0,x_1)\big),
$$
where we used Lemma \ref{lemma2.28.2} to estimate the third term on the left-hand side.
By \eqref{eq21.3.10} and \eqref{eq00.20},
the above is further bounded by
$$
N(\theta_1)
\left(\frac{1}{2}\right)^k \left[\varphi( \rho_0)
+
\frac{2N_1}{1-\theta_1} \rho_0^2 \right]+N(\theta_1)\rho^2
\le
N \rho^{\alpha_0}.
$$
In this last line $N=N(\theta_1, \varphi( \rho_0), N_1, \rho_0)$ and $\alpha_0 = \frac{\log(1/2)}{\log(\theta_1)}>0$.
This yields \eqref{eq18.6.48} for the terms $A$, $E$ and $D$.
The inequality for $C(\rho,x_1)$ follows from \eqref{eq4.48}.
\end{proof}

\subsection{Step 3 -- Proofs of Theorems \ref{thm3}-\ref{thm1}}  \label{Sec5}
In the final step, we are going to use a bootstrap argument  to successively improve the decay estimate \eqref{eq18.6.48}. However, as we will show below, the bootstrap argument itself only gives the decay of $E(\rho)$ no more than $\rho^2$,
i.e. one can obtain an estimate like
$$
\int_{B(x_1,\rho)}|\nabla u|^{2}\,dx\le N(\epsilon) \rho^{4-\epsilon}, \quad \forall \epsilon>0,
$$
for any $\rho$ sufficiently small. Unfortunately, this decay estimate is not enough for the H\"older regularity of $u$ since the dimension is six (so that we need the exponent $4+\epsilon$ instead of $4-\epsilon$ according to the Morrey lemma).
Then to fill in this gap we will use the elliptic theory.

First we prove Theorem \ref{thm4}. We begin with the bootstrap argument.  We will choose an increasing sequence of real numbers $\{\alK\}_{k=1}^\infty \in (\alpha_0,2)$ such that for any small $\delta>0$ we can find an integer $m=m(\delta)$ with the property that $\alpha_m>2-\delta$.

For a fixed $\delta>0$ and $m=m(\delta)$, under the condition \eqref{eq4.1.11}, we {\it claim} that the following estimates hold uniformly for all $\rho >0$ sufficiently small and $x_1\in B(x_0,\rho_0/8)$ over the range of $\{\alK\}_{k=0}^m$:
\begin{multline}\label{eq10.53}
A(\rho,x_1)+E(\rho,x_1)\le N\rho^{\alK}, \quad
\\
C(\rho,x_1)\le N\rho^{3\alK/2},\quad
D(\rho,x_1)\le N\rho^{3\alK/2}.
\end{multline}
We prove this via iteration.  The  $k=0$ case for \eqref{eq10.53} with $\alpha_0$ was proven in \eqref{eq18.6.48}.

We first estimate $A(\rho,x_1)$ and $E(\rho,x_1)$.  Let $\rho = \tilde{\theta} \tilde{\rho}$ where $\tilde{\theta} =  \rho^\mu$,
$\tilde{\rho} = \rho^{1-\mu}$ and $\mu \in (0,1)$ to be determined.  We use Lemma \ref{lemma05.18.1} and then \eqref{eq10.53} (for $\alK$) to obtain
$$
A(\rho) + E(\rho)
\le
N
\left(
\rho^{2\mu+\alK(1-\mu)}
+
\rho^{\frac{3}{2}\alK(1-\mu) - 3 \mu}
+
\rho^{4(1-\mu)-6 \mu}
\right).
$$
Choose $\mu = \frac{\alK}{10+\alK} $,
then \eqref{eq10.53} is proven for $A(\rho) + E(\rho)$ with the exponent of
\begin{align*}
\alpha_{k+1}:=
\min\Big\{2\mu+\alK(1-\mu),\frac{3}{2}\alK(1-\mu) - 3 \mu, 4(1-\mu)-6 \mu \Big\}\\
=\frac{3}{2}\alK(1-\mu) - 3 \mu=\frac{12}{10+\alK} \alK\in (\alK,2).
\end{align*}
Then the estimate in \eqref{eq10.53} (with $\alpha_{k+1}$) for $C(\rho,x_1)$ follows from \eqref{eq4.48}.  To prove the estimate in  \eqref{eq10.53} (after level $k$) for $D(\rho,x_1)$ we will use Lemma \ref{lemma2.28.2}.
From \eqref{eq11.30d} we obtain
$$
D(\gamma\rho,x_1)
\le
N
\left(
\gamma^{9/2}D(\rho,x_1)+\gamma^{-3}\rho^{3\alpha_{k+1}/2}
+
\gamma^{-3}\rho^3
\right).
$$
The estimate used here for $F(\rho)$ follows from \eqref{eq00.20}. Now for any $r$ small, we take the supremum on both sides with respect to $\rho\in (0,r)$ and get
$$
\sup_{\rho\in (0,r]}D(\gamma\rho,x_1)
\le
N
\gamma^{9/2}\sup_{\rho\in (0,r]}D(\rho,x_1)+N\gamma^{-3}r^{3\alpha_{k+1}/2}
+N\gamma^{-3}r^3.
$$
Since $9/2>3>\frac{3}{2}\alpha_{k+1}$, by using a well-known iteration argument, similar to \eqref{to.iterate} (or see e.g., \cite[Chap. 3, Lemma 2.1]{Gi83}), we obtain the estimate in \eqref{eq10.53} (with $\alpha_{k+1}$) for $D(\rho)$.  Then we have shown how to build the increasing sequence of $\{\alK\}$ for which \eqref{eq10.53} holds. Moreover,
$$
2-\alpha_{k+1}=\frac {10} {10+\alpha_k}(2-\alpha_k)\le \frac {10} {10+\alpha_0}(2-\alpha_k),
$$
which implies that $\alpha_k\to 2$ as $k\to \infty$.
Note that by the above proof, the constant $N$ in \eqref{eq10.53} may go to infinity as $k\to\infty$; thus we truncate at level $m<\infty$.

In particular, \eqref{eq10.53} with $k=m$ gives for any small $\delta = \delta(m)>0$ that
\begin{align}
                                \label{eq10.54}
\int_{B(x_1,\rho)}|u|^2\,dx&\le N\rho^{6-\delta},\\
                                \label{eq10.59}
\int_{B(x_1,\rho)}|u|^3+|p-[h]_{x_1,\rho}|^{3/2}\,dx&\le N\rho^{6-\frac{3}{2}\delta}.
\end{align}
We obtained these estimates via the bootstrap argument, next we will use the elliptic theory to improve them.

Now we fix a $\delta\in (0,1/10)$ and rewrite \eqref{NSeq1}  (in the sense of distributions) into
$$
\Delta u_i=D_j(u_iu_j)+D_i p-f_i.
$$
Finally, we use the classical elliptic theory to complete the proof.
Thanks to \eqref{eq10.54}, there exists $\rho_1\in (\rho/2,\rho)$ such that
\begin{equation}
                                        \label{eq10.12.07}
\int_{S(x_1,\rho_1)}|u|^2\,dx \le N\rho^{5-\delta}.
\end{equation}
Let $v$ be the unique $H^1$ solution to the Laplace equation
$$
\Delta v_i=0 \quad \text{in}\,\,B(x_1,\rho_1),
$$
with the boundary condition $v_i=u_i$ on $S(x_1,\rho_1)$. It follows from the standard estimates for harmonic functions, H\"older's inequality, and \eqref{eq10.12.07} that
\begin{equation}
                                    \label{eq10.16.45}
\sup_{B(x_1,\rho_1/2)}|\nabla v|\le N\rho_1^{-6}\int_{S(x_1,\rho_1)}|v|\,dx\le N\rho^{-1-\delta/2}.
\end{equation}
Denote $w=u-v\in H^1(B(x_1,\rho_1))$. Then $w$ satisfies the Poisson equation	
$$
\Delta w_i=D_j(u_iu_j)+D_i (p-[h]_{x_1,\rho})-f_i \quad \text{in}\,\,B(x_1,\rho_1).
$$
with zero boundary condition on $S(x_1,\rho_1)$. By the classical $L_p$ estimates for the Poisson equation, we have
\begin{multline*}
\|\nabla w\|_{L_{3/2}(B(x_1,\rho_1))}\le
N\left\||u|^2\right\|_{L_{3/2}(B(x_1,\rho_1))}\\
+N\left\|p-[h]_{x_1,\rho}\right\|_{L_{3/2}(B(x_1,\rho_1))}
+N\rho_1\left\|f\right\|_{L_{3/2}(B(x_1,\rho_1))}.
\end{multline*}
This together with the assumption on $f$ and \eqref{eq10.59} gives
\begin{equation}
                                    \label{eq10.16.46}
\|\nabla w\|_{L_{3/2}(B(x_1,\rho_1))}\le
N\rho^{4-\delta}+N\rho^{4} \le
N\rho^{4-\delta}.
\end{equation}
Since $|\nabla u|\le |\nabla w|+|\nabla v|$, we combine \eqref{eq10.16.45} and \eqref{eq10.16.46} to obtain, for any $r\in (0,\rho/4)$, that
$$
\int_{B(x_1,r)}|\nabla u|^{3/2}\,dx\le N\rho^{6-3\delta/2}+Nr^6 \rho^{-3/2-3\delta/4}.
$$
Upon taking $r=\rho^{5/4-\delta/8}/4$ (with $\rho$ small), we get
\begin{equation}
                                    \label{eq17.31}
\int_{B(x_1,r)}|\nabla u|^{3/2}\,dx\le N r^{\beta},
\end{equation}
where
$$
\beta=\frac{6-3\delta/2}{5/4-\delta/8}>6-3/2.
$$
Since \eqref{eq17.31} holds for arbitrary $x_1\in B(x_0,\rho_0/8)$ and all $r$ small, by the Morrey lemma (see for instance 
\cite[Theorem 1.1 on p. 64 of Ch. III]{Gi83}),
$u$ is H\"older continuous in a neighborhood of $x_0$. This completes the proof of Theorem \ref{thm4}.

Theorem \ref{thm3} then follows from Theorem \ref{thm4} by applying Proposition \ref{lemma05.03.02}.
Finally, Theorem \ref{thm1} is deduced from Theorem \ref{thm3} by using the standard argument in the geometric measure theory, which is explained for example in \cite{CKN},  or alternatively in \cite{Gi83}.

\begin{remark}
                                    \label{finalrem}
Finally we remark that by using the same method we can get an alternative proof of Theorem \ref{thm0} for the 5D steady-state Navier-Stokes equations if we assume that $f\in L_{5,\text{loc}}$. However, it seems to us that six is the highest
dimension to which our approach (or any existing approach) applies.
In fact, by the Sobolev imbedding theorem, $H^1(\bR^6)\hookrightarrow L_3(\bR^6)$. So the nonlinear term in the energy inequality can be controlled by the energy norm when $d=6$ but not higher.
\end{remark}


\end{document}